\documentclass[12pt]{amsart}
\usepackage{amssymb,amsmath,amsthm,amssymb}
\usepackage{mathtools}
\usepackage{graphicx}
\usepackage{comment}
\usepackage{hyperref}
\usepackage{enumerate}
\usepackage{color}
\usepackage{epstopdf}
\renewcommand{\epsilon}{\varepsilon}

\def\R{\mathbb{R}}

\def\e{\epsilon}
\def\t{\tau}

\def\th{\theta}

\newcommand{\pd}{\partial}
\newcommand{\cd}{\nabla}

\def\ba #1\ea {\begin{align} #1\end{align}}
\def\bann #1\eann {\begin{align*} #1\end{align*}}
\def\ben #1\een {\begin{enumerate} #1\end{enumerate}}
\def\bi #1\ei {\begin{itemize}\renewcommand\labelitemi{--} #1\end{itemize}}

\newcommand{\inner}[2]{\left\langle#1,#2\right\rangle} 

\newtheorem{theorem}{Theorem}[section]
\newtheorem*{theorem*}{Theorem}
\newtheorem{lemma}[theorem]{Lemma}
\newtheorem{proposition}[theorem]{Proposition}
\newtheorem{remark}[theorem]{Remark}
\newtheorem{corollary}[theorem]{Corollary}

\newtheorem*{conjecture*}{Conjecture}

\newtheorem*{claim*}{Claim}

\newtheoremstyle{TheoremNum}
        {\topsep}{\topsep}              
        {\itshape}                      
        {}                              
        {\bfseries}                     
        {.}                             
        { }                             
        {\thmname{#1}\thmnote{ \bfseries #3}}
    \theoremstyle{TheoremNum}

\author{Theodora Bourni}
\author{Mat Langford}
\address{Department of Mathematics, University of Tennessee Knoxville, Knoxville TN, 37996-1320}
\email{tbourni@utk.edu, mlangford@utk.edu}
\address{School of Mathematical and Physical Sciences, The University of Newcastle, Newcastle, NSW, Australia, 2308}
\email{mathew.langford@newcastle.edu.au}

\date{\today}

\title[Convex ancient free boundary MCF in the ball.]{Classification of convex ancient free boundary mean curvature flows in the ball.}

\begin{document}

\begin{abstract}
We prove that there exists, in every dimension, a unique (modulo rotations about the origin and time translations) convex ancient mean curvature flow in the ball with free boundary on the sphere. This extends the main result of \cite{FBCSF} to general dimensions.
\end{abstract}

\maketitle

\setcounter{tocdepth}{1}
\tableofcontents

\section{Introduction}

Mean curvature flow is the gradient flow of area for regular submanifolds. It models evolutionary processes governed by surface tension, such as grain boundaries in annealing metals \cite{Mullins,vonNeumann}.

The systematic study of mean curvature flow was initiated by Brakke \cite{Brakke} (from the point of view of geometric measure theory) and later taken up by Huisken \cite{Hu84}, who proved that closed convex hypersurfaces remain convex and shrink to ``round'' points in finite time. Different proofs of Huisken's theorem were discovered later by others \cite{An94,An12,HamiltonPinched}.

Ancient solutions (that is, solutions defined on backwards-infinite time-intervals) are important from an analytical standpoint as they model singularity formation \cite{MR1375255}. They also arise in quantum field theory, where they model the ultraviolet regime in certain Dirichlet sigma models \cite{Bakas2}. They have generated a great deal of interest from a purely geometric standpoint due to their symmetry and rigidity properties (see, for example, \cite{BLTatomic} and the references therein). 

The natural Neumann boundary value problem for mean curvature flow, called the \emph{free boundary problem}, asks for a family of hypersurfaces whose boundary lies on (but is free to move on) a fixed barrier hypersurface which is met by the solution hypersurface orthogonally. Study of the free boundary problem was initiated by Huisken \cite{HuiskenNonparametric} and further developed by Stahl \cite{Stahl2,Stahl1} and others \cite{MR3479560,MR2718258,MR3200337,MR3150205}. In particular, Stahl proved that convex hypersurfaces with free boundary on a totally umbilic barrier remain convex and shrink to a ``round'' point on the barrier hypersurface. Hirsch and Li \cite{HirschLi} proved that the same conclusion holds for ``sufficiently convex'' barriers in $\R^3$.

The analysis of ancient solutions to free boundary mean curvature flow remains in its infancy. We recently classified the convex ancient solutions to curve shortening flow in the disc \cite{FBCSF} but, to our knowledge, the only other examples previously known seem to be those inherited from closed or complete examples (one may restrict the shrinking sphere, for example, to a halfspace).

We provide here a classification of convex\footnote{A free boundary hypersurface of the open ball $B^{n+1}$ is \emph{convex} if it bounds a convex region in $B^2$ and \emph{locally uniformly convex} if it is of class $C^2$ and its second fundamental form is positive definite.} ancient free boundary mean curvature flows in the ball in all dimensions.

\begin{theorem}\label{thm:classification}
Modulo rotation about the origin and translation in time, there exists, for each $n\ge 1$, exactly one convex, locally uniformly convex ancient solution to free boundary mean curvature flow in the $(n+1)$-ball $B^{n+1}$. It converges to the point $e_{n+1}$ as $t\to 0$ and smoothly to the horizontal bisector $B^{n}\times\{0\}$ as $t\to-\infty$. It is invariant under rotations about the $e_{n+1}$-axis. As a graph over $B^{n}$, it satisfies
\[
\mathrm{e}^{\mu_0t}u(x,t)\to A\phi_0(x)\;\;\text{uniformly in $x$ as}\;\; t\to-\infty
\]
for some $A>0$, where 
$\mu_0<0$ is the lowest eigenvalue and $\phi_0$ the corresponding ground state of the ``critical-Robin'' Laplacian on $B^n$.
\end{theorem}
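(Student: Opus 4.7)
The plan is to treat existence, symmetry, asymptotic analysis, and uniqueness in turn, mirroring the strategy used in the two-dimensional case in \cite{FBCSF} but upgrading every step to handle graphs over $B^n$ rather than functions on an interval.

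First I would extract the qualitative geometry. Given any convex, locally uniformly convex ancient free boundary solution $M_t\subset B^{n+1}$, Stahl's theorem applied forward in time forces $M_t$ to contract to a single point $p\in\pd B^{n+1}$ at some finite $t_{\max}$; by time translation we may take $t_{\max}=0$, and by rotation we may take $p=e_{n+1}$. To obtain the rotational symmetry about the $e_{n+1}$-axis, I would run an Alexandrov moving planes argument: every hyperplane through the origin containing $e_{n+1}$ is a plane of symmetry of $B^{n+1}$ (hence is compatible with the free boundary condition), so reflection of $M_t$ across such a plane can be compared to $M_t$ itself via the avoidance principle; convexity and the shrinking point prevent the moving plane from stopping before reaching the axis. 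This also shows $M_t$ is graphical over the horizontal bisector disc $B^n\times\{0\}$ for all $t$ close to $-\infty$; convexity lets us promote this to all $t\le0$.

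Writing $M_t=\graph u(\cdot,t)$ over $B^{n}\times\{0\}$, the graphical MCF equation with the orthogonality condition at $\pd B^{n+1}$ becomes
\ba
u_t = \sqrt{1+|\cd u|^2}\,\div\!\left(\frac{\cd u}{\sqrt{1+|\cd u|^2}}\right), \qquad \pd_\nu u = u\sqrt{1+|\cd u|^2}\;\text{ on }\pd B^n,
\ea
the Robin boundary condition coming from the perpendicularity of the graph to $S^n$. Convexity together with the avoidance principle (comparing to spherical caps shrinking in the ball) forces $\|u(\cdot,t)\|_{C^0}\to 0$ as $t\to-\infty$, so the equation linearizes to $v_t = \Delta v$ on $B^n$ with the critical-Robin condition $\pd_\nu v = v$. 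The eigenvalue problem for this operator has a simple, strictly negative lowest eigenvalue $\mu_0<0$ with a positive radial ground state $\phi_0$. Standard parabolic smoothing plus the spectral gap give, for any such solution, that $\mathrm{e}^{\mu_0 t}u(\cdot,t)$ converges in $C^k$ to $A\phi_0$ for some $A>0$; higher eigenmodes decay faster and the lowest mode coefficient cannot be zero because a standard backward-uniqueness / maximum principle argument rules out faster-than-$\mathrm{e}^{\mu_0 t}$ decay for a non-trivial convex ancient solution.

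For existence I would construct the solution by a limiting procedure: for each $\tau\ll 0$, solve the free boundary problem with initial datum a small graphical perturbation of $B^n\times\{0\}$ in the direction of $\phi_0$ at time $\tau$; a priori estimates for convex free boundary MCF (Stahl, plus interior convexity estimates) and the a-priori linearized bound $\|u(\cdot,t)\|\lesssim \mathrm{e}^{\mu_0 t}$ give uniform control as $\tau\to-\infty$, and a diagonal subsequence yields an ancient solution with the asserted asymptotics. Uniqueness is the main obstacle. Given two solutions $u^1,u^2$, time-translating one of them we may assume their leading coefficients $A^1=A^2$. I would then study the difference $w=u^1-u^2$, which satisfies a linear parabolic equation with critical Robin boundary condition whose coefficients converge to those of the limit linearization as $t\to-\infty$. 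The fact that the $\phi_0$-mode of $w$ vanishes to leading order means $\mathrm{e}^{\mu_0 t}w\to 0$, and a weighted energy / Carleman-type estimate in the spirit of Merle–Zaag (adapted to the Robin boundary condition) then forces $w\equiv 0$. Controlling the nonlinear error terms uniformly up to the free boundary, where the Robin condition is only critical (so standard Schauder theory gives no room), is the key technical difficulty; I would handle it by using the explicit monotonicity of the support functions granted by convexity to absorb the borderline terms.
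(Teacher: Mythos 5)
Your high-level decomposition (reduce to a graph over the bisector, linearize to the critical-Robin heat equation, identify the ground-state mode, get rotational symmetry via Alexandrov reflection, construct by a limiting procedure, then prove uniqueness) matches the skeleton of the paper's argument, and the Alexandrov reflection step and the set-up of the Robin boundary condition are essentially what the paper does. But there are two substantive gaps.

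First, the existence sketch is missing a lower bound. Your a priori bound $\|u(\cdot,t)\|\lesssim \mathrm{e}^{\mu_0 t}$ only prevents the old-but-not-ancient solutions from escaping; by itself it allows the limit to collapse to the stationary bisector, i.e.\ $A=0$. The paper avoids this by choosing as initial data the explicit rotationally symmetric subsolutions $\Sigma^\lambda_{t}$ built from the radial ground state $\phi_0$, proving the pointwise inequality $H/y\ge\lambda_0^2$ (so that $\mathrm{e}^{-\lambda_0^2 t}\underline y$ is nondecreasing) together with the matching upper bound $H/y\le\lambda_0^2+C\mathrm{e}^{2nt}$ (via a gradient estimate $|\nabla H|\le 4H$), and concluding $A=\lim_{t\to-\infty}\mathrm{e}^{-\lambda_0^2 t}\underline y\in(0,\infty)$. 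Some replacement for this two-sided pinching is needed; the linearized upper bound alone does not give a non-trivial ancient limit.

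Second, and more importantly, your proposed uniqueness argument is genuinely different from the paper's and is left incomplete where it matters. You propose to normalize so that the two solutions have the same leading coefficient, then run a Merle--Zaag-type weighted energy/Carleman estimate on the difference $w=u^1-u^2$, and you explicitly flag that controlling the borderline Robin terms is a ``key technical difficulty'' you would hope to absorb using convexity. The paper does not do any of this. Instead, once Proposition~\ref{prop:linear analysis} shows that \emph{every} convex ancient solution has the asymptotics $\mathrm{e}^{-\lambda_0^2 t}y(\cdot,t)\to A\phi_0(\lambda_0|\cdot|)$ with $A>0$, uniqueness follows directly from the avoidance principle: time-translating one solution by $\tau>0$ multiplies its leading coefficient by $\mathrm{e}^{\lambda_0^2\tau}>1$, so for $-t$ large the translated flow lies strictly above the other one; avoidance propagates this ordering to all times; letting $\tau\to0$ and using that both flows reach $e_{n+1}$ at time $0$, the strong maximum principle forces them to coincide. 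This trick (monotone dependence of the ground-state amplitude on time translation, combined with avoidance) sidesteps the linear stability analysis entirely and is substantially more elementary than the Carleman route you sketch. Notice also that the comparison with the explicitly constructed solution is what rules out $A=0$ for an arbitrary solution (via \eqref{eq:sup bound sublimit} together with \eqref{eq:min y lower bound general}); your proposed ``backward-uniqueness / maximum principle argument rules out faster decay'' is asserted, not established, and in the paper's scheme it is replaced by the comparison with the constructed solution whose $A$ is already known to be positive.

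One minor point: the flow is not graphical over the full bisector $B^n\times\{0\}$ up to $t=0$ --- near the final time the free boundary circle shrinks toward $e_{n+1}$ and the surface is a graph only over a small subdisc --- so the promotion ``to all $t\le 0$'' is not correct as stated, though it is not needed since the asymptotics are only as $t\to-\infty$.
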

Theorem \ref{thm:classification} is a consequence of Propositions \ref{prop:existence}, \ref{prop:linear analysis}, and \ref{prop:uniqueness} proved below. Note that it is actually a classification of all convex ancient solutions, since the strong maximum principle and the Hopf boundary point lemma imply that any convex solution to the flow is either a stationary hyperball (and hence a bisector of the $(n+1)$-ball by the free boundary condition) or is locally uniformly convex at interior times.

Our proof is strongly motivated by the one-dimensional case \cite{FBCSF}. The main differences involve the use of the critical-Robin ground state in the construction of the solution and of a Liouville theorem for the critical-Robin heat equation (which we prove in \S\ref{sec:logNeumann}) in proving its uniqueness. 



\subsection*{Acknowledgements}
ML would like to thank Julie Clutterbuck for useful discussions about the Robin Laplacian, and the MATRIX Institute for its hospitality during the \emph{Isoperimetric Inequalities in Geometric Partial Differential Equations} workshop where these discussions took place.

TB was supported through grant 707699 of the Simons Foundation and grant DMS 2105026 of the National Science Foundation. ML was supported through an Australian Research Council DECRA fellowship (grant DE200101834).



\section{The critical-Robin heat equation}\label{sec:logNeumann}

We will need\footnote{Most (if not all) of the results of this section appear to be well-known (they are simple consequences of, for example, the results of \cite[\S 5]{Robin}) but they are easy enough to obtain directly, so we include the details here.} a Liouville theorem for ancient solutions to the ``critical-Robin heat equation'' on the unit $n$-ball $B^n$; that is, solutions 
$u$ to the problem
\begin{equation}\label{eq:heat equation on the interval}
\left\{\begin{aligned}
(\pd_t-\Delta)u={}&0\;\;\text{in}\;\; B{}^n\times(-\infty,\infty)\\
\cd u\cdot x={}&u\;\;\text{on}\;\; \pd B^n\,.
\end{aligned}\right.
\end{equation}
Separation of variables leads us to consider eigenfunctions of the ``critical-Robin Laplacian'' on $B^n$; that is, solutions 
$u$ to the problem
\begin{equation}\label{eq:eigenfunction}
\left\{\begin{aligned}
-\Delta\phi={}&\mu\phi\;\;\text{in}\;\; B^n\\
\cd\phi\cdot x={}&\phi\;\;\text{on}\;\;\pd B^n\,.
\end{aligned}\right.
\end{equation}
Observe that the lowest eigenvalue $\mu_0$ is variationally characterized by
\[
\mu_0
=\inf_{
u\in H^1(B^n)}\frac{B(u,u)}{\vert u\vert^2_{L^2(B^n)}}\,,
\]
where\footnote{The second integral is interpreted according to the trace theorem.}
\[
B(u,v)\doteqdot \int_{B^n}\cd u\cdot\cd v-\int_{\pd B^n}uv\,.
\]
We note that standard methods (cf. \cite[\S6.5.1]{EvansBook}) ensure that $\mu_0$ is finite and simple, the spectrum forms a sequence $-\infty<\mu_0<\mu_1\le\dots\le \mu_k\le\dots$ (with each eigenvalue appearing according to its multiplicity and $\mu_k\to\infty$), and $L^2(B^n)$ admits an orthonormal basis consisting of corresponding eigenfunctions (which are necessarily smooth).

Separation of variables leads us to consider solutions of the form $u(r,z)=\phi(r)\Phi(z)$, where $\phi$ satisfies
\begin{equation}\label{eq:radial eigenfunction higher eigenvalues}
-\left(\phi_{rr}+\frac{n-1}{r}\phi_r-\frac{\ell(\ell+n-2)}{r^2}\phi\right)=\mu\phi\;\;\text{in}\;\; (0,1)
\end{equation}
and $\Phi$ satisfies
\[
-\Delta_{S^{n-1}}\Phi=\ell(\ell+n-2)\Phi
\]
for some non-negative integer $\ell$.


\begin{lemma}\label{lem:negative eigenspace}
The negative eigenspace of \eqref{eq:eigenfunction} is one-dimensional.
\end{lemma}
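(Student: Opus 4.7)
My plan is to decompose $B$ via spherical harmonics and analyze each angular sector separately. Since \eqref{eq:eigenfunction} is rotationally invariant, $L^2(B^n)$ decomposes orthogonally into sectors indexed by the degree $\ell\ge 0$ of the spherical-harmonic factor, and on each such sector the form $B$ reduces to
\[
B_\ell(\phi,\phi) = \int_0^1\!\left((\phi')^2 + \frac{\ell(\ell+n-2)}{r^2}\phi^2\right)r^{n-1}\,dr - \phi(1)^2
\]
acting on radial functions $\phi$. Hence it suffices to show $B_\ell\ge 0$ for every $\ell\ge 1$ and that $B_0$ admits exactly one negative eigenvalue.

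For $\ell\ge 1$ I would substitute $\phi(r) = r^\ell\psi(r)$ and integrate the resulting cross-term $2\ell\int_0^1 r^{2\ell+n-2}\psi\psi'\,dr$ by parts. The boundary contribution at $r=0$ vanishes (since $2\ell+n-2\ge 1$), and the $\psi^2$ terms cancel exactly, leaving
\[
B_\ell(\phi,\phi) = (\ell-1)\,\psi(1)^2 + \int_0^1 (\psi')^2\,r^{2\ell+n-1}\,dr \ge 0,
\]
so no eigenvalue from an $\ell\ge 1$ sector can be negative. (As a byproduct, for $\ell=1$ this identifies $\mu=0$ with eigenfunction $\phi=r$, corresponding to the $n$-parameter family $u(x)=x\cdot v$, but this contributes nothing to the \emph{negative} eigenspace.)

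For $\ell=0$, testing on $\phi\equiv 1$ gives $B_0(1,1)=-1<0$, so a negative eigenvalue does exist. To see that at most one does, I would exploit the rank-one structure of the negative part of $B_0$. Suppose toward contradiction that $V$ is a two-dimensional subspace of radial $H^1(B^n)$ on which $B_0$ is negative definite (as the Courant--Fischer min-max principle would produce if the second radial eigenvalue were negative). The trace functional $\phi\mapsto\phi(1)$ is linear on $V$ with one-dimensional range, so its kernel meets $V$ non-trivially; any nonzero $\phi\in V$ with $\phi(1)=0$ then satisfies $B_0(\phi,\phi)=\int_0^1(\phi')^2 r^{n-1}\,dr\ge 0$, contradicting negative definiteness on $V$. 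Combined with the simplicity of $\mu_0$ recorded just above the lemma, this forces the full negative eigenspace of \eqref{eq:eigenfunction} to be one-dimensional. The step requiring the most care is the $\ell=0$ argument: the key is to recognize that the negativity of $B_0$ is concentrated in a single trace functional, so that a bare dimension count suffices and no detailed Sturm--Liouville oscillation analysis is needed.
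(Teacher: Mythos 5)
Your proof is correct, and it takes a genuinely different route from the paper's. The paper works at the level of the ODE: after rescaling $\rho=\lambda r$ it writes the radial eigenfunction as a power series, deduces that only $\ell=0$ (constant spherical harmonic) or $\ell=1$ (linear) modes admit solutions, computes the coefficients explicitly, and then shows that the boundary relation has no solution $\lambda>0$ when $\ell=1$ and exactly one solution when $\ell=0$, using positivity of the series coefficients to get monotonicity. You instead work at the level of the quadratic form: the substitution $\phi=r^\ell\psi$ and one integration by parts gives the clean identity $B_\ell(\phi,\phi)=(\ell-1)\psi(1)^2+\int_0^1(\psi')^2 r^{2\ell+n-1}\,dr$, which kills all $\ell\ge 1$ sectors at once (and, incidentally, identifies the $\ell=1$ zero mode and shows the $\ell\ge 2$ sectors are strictly positive, so it would also recover the paper's next lemma on the null eigenspace). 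For $\ell=0$ your observation that the negative part of $B_0$ is the rank-one functional $\phi\mapsto\phi(1)^2$ bounds the negative subspace by dimension one without ever solving the ODE or producing the power series. This is more structural and arguably more robust, since it sidesteps the delicate bookkeeping of recurrence coefficients; the paper's argument, by contrast, also produces the explicit ground state $\phi_0$ as a power series, which it reuses heavily in \S2.1. One small remark: the appeal to simplicity of $\mu_0$ at the very end is not actually needed --- the rank-one bound already caps the $\ell=0$ negative subspace at dimension one, $B_\ell\ge 0$ rules out the remaining sectors, and $B_0(1,1)=-1<0$ supplies existence, so you get ``exactly one'' directly.
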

\begin{proof}
Assuming $\mu<0$, consider the radial problem \eqref{eq:radial eigenfunction higher eigenvalues}. 
We make the change of variable $r\mapsto \rho\doteqdot \lambda r$, where $\lambda\doteqdot \sqrt{-\mu}$, which results in the problem
\begin{equation}\label{eq:negative eigenspace change of variable}
\left\{\begin{aligned}
\phi''+\frac{n-1}{\rho}\phi'-\frac{\ell(\ell+n-2)}{\rho^2}\phi={}&\phi\;\;\text{in}\;\; (0,\lambda)\\
\lambda\phi'(\lambda)={}&\phi(\lambda)\,.
\end{aligned}\right.
\end{equation}
%
Consider a formal Taylor series solution
\[
\phi(\rho)=\sum_{j=0}^\infty a_j\rho^j
\]
to \eqref{eq:negative eigenspace change of variable}. Observe that
\[
\phi'(\rho)=\sum_{j=1}^\infty ja_j\rho^{j-1}\;\;\text{and}\;\;\phi''(\rho)=\sum_{j=2}^\infty j(j-1)a_j\rho^{j-2}
\]
and hence $\phi$ satisfies \eqref{eq:negative eigenspace change of variable} if and only if
\bann
0={}&\phi''+\frac{n-1}{\rho}\phi'-\frac{\ell(\ell+n-2)}{\rho^2}\phi-\phi\\
={}&\frac{(n-1)a_1-\ell(\ell+n-2)a_1}{\rho}-\frac{\ell(\ell+n-2)a_0}{\rho^2}\\
{}&+\sum_{j=2}^\infty\big(j(n+j-2)a_j-\ell(n+\ell-2)a_j-a_{j-2}\big)\rho^{j-2}\,.
\eann
Since $\phi\in L^2(0,1)$, the first two terms must vanish. Lest $\phi\equiv 0$, we conclude that either 1. $a_0=0$ and $\ell=1$ (and hence $\Phi$ is linear), or 2. $a_1=0$ and $\ell=0$ (and hence $\Phi$ is constant).

In the first case,
\[
(j-1)(j+n-1)a_j=a_{j-2}\;\;\text{for}\;\; j\ge 2\,,
\]
so that
\[
\phi(\rho)=\sum_{j=0}^\infty b_j\rho^{2j+1}\,,\;\; b_j\doteqdot a_{2j+1}\,,
\]
where the coefficients are given by
\bann
b_j={}&b_0\prod_{\ell=1}^j\frac{1}{2\ell(n+2\ell)}\,.
\eann
We may assume that $b_0=1$. Since
\[
\phi'(\rho)=1+\sum_{j=1}^\infty (2j+1)b_j\rho^{2j}\,,
\]
the boundary condition yields the equation
\[
\sum_{j=1}^\infty 2jb_j\lambda^{2j}=0
\]
for the frequency $\lambda$, which is impossible since $\lambda>0$.

In the second case,
\[
j(n+j-2)a_j=a_{j-2}\;\;\text{for}\;\; j\ge 2\,,
\]
so that
\[
\phi(\rho)=\sum_{j=0}^\infty b_j\rho^{2j}\,,\;\; b_j\doteqdot a_{2j}\,,
\]
where the coefficients are given by
\bann
b_j={}&b_0\prod_{\ell=1}^j\frac{1}{2\ell(n+2(\ell-1))}\,.
\eann
We may assume that $b_0=1$. 

Since
\[
\phi'(\rho)=\sum_{j=1}^\infty 2jb_j\rho^{2j-1}\,,
\]
the boundary condition then yields the equation
\[
\sum_{j=1}^\infty(2j-1)b_j\lambda^{2j}=1\,
\]
for the frequency $\lambda$. Since the coefficients are all positive, the expression on the left is monotone in $\lambda$. Since it goes to zero when $\lambda\to 0$ and to infinity when $\lambda\to \infty$, there must be exactly one solution, and hence exactly one negative eigenvalue. Since $\phi$ was determined by $\lambda$ up to choosing $b_0$, the claim follows.
\end{proof}

Denote by $\phi_0$ the unique solution to \eqref{eq:radial eigenfunction higher eigenvalues} with $\mu=\mu_0$ and $\phi_0(0)=1$ and set $\lambda_0\doteqdot\sqrt{-\mu_0}$. Note that
\begin{equation}\label{eq:defining equation lambda_0}
\lambda_0\frac{\phi'(\lambda_0)}{\phi(\lambda_0)}=1
\end{equation}
and, since $\lambda_0$ is increasing in the dimension $n$, $\lambda_0\ge \lambda_0|_{n=1}>1$. 

\begin{lemma}\label{lem:negative eigenspace}
The null eigenspace of \eqref{eq:eigenfunction} consists of the linear functions.
\end{lemma}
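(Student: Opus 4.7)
The plan is to mimic the separation-of-variables analysis used for Lemma \ref{lem:negative eigenspace}, but now specialized to the case $\mu=0$. The key simplification is that setting $\mu=0$ in \eqref{eq:radial eigenfunction higher eigenvalues} produces a Cauchy--Euler equation whose solutions are explicit powers of $r$, so the boundary analysis becomes a single algebraic check rather than a convergent-series argument.

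First, I would expand a candidate $\mu=0$ eigenfunction in $L^2(B^n)$ against the orthonormal basis of spherical harmonics $\{\Phi_{\ell,k}\}$ on $S^{n-1}$. By the standard orthogonality argument, each component $\phi_\ell(r)\Phi_{\ell,k}(z)$ is itself in the null eigenspace, so it suffices to classify the radial factors $\phi_\ell$ on $(0,1)$ satisfying
\begin{equation*}
\phi_\ell''+\frac{n-1}{r}\phi_\ell'-\frac{\ell(\ell+n-2)}{r^2}\phi_\ell=0,\qquad \phi_\ell'(1)=\phi_\ell(1),
\end{equation*}
with $\phi_\ell\in L^2(0,1;r^{n-1}dr)$. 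Because this ODE is homogeneous of Euler type, its solution space is spanned by $r^\ell$ and $r^{2-n-\ell}$ (with a logarithmic substitute in the exceptional case $\ell=0,\,n=2$). The second solution fails the integrability condition at the origin for every $\ell\ge 0$ (again modulo the log case, which also fails), so the only admissible radial profile is $\phi_\ell(r)=c\,r^\ell$.

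Next I would impose the Robin boundary condition on $\phi_\ell(r)=r^\ell$, which becomes $\ell=1$. Thus only the degree-one spherical harmonic modes survive, and the null eigenspace is spanned by $\{r\Phi_{1,k}(z)\}_k$. Since $r\Phi_{1,k}$ for a degree-one spherical harmonic $\Phi_{1,k}$ is precisely the restriction to $B^n$ of a linear coordinate function on $\R^n$, this identifies the null eigenspace with the space of linear functions.

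The only real obstacle is the bookkeeping at $\ell=0$ in dimension $n=2$, where the second Euler solution is $\log r$ rather than a negative power; there one checks directly that $\log r\notin L^2$ (or, alternatively, that it has no finite trace on $\partial B^n$), so it is still ruled out. Aside from this minor case check, the proof is a short computation built on the separation-of-variables framework already established in the proof of Lemma \ref{lem:negative eigenspace}.
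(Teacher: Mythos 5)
Your overall route is the same as the paper's: reduce to the radial Cauchy--Euler equation via spherical harmonics, identify the two-dimensional solution space $\mathrm{span}\{r^\ell,\,r^{2-n-\ell}\}$, discard the singular branch, and then impose the Robin condition to force $\ell=1$. The problem is that the criterion you use to discard the second branch is wrong in exactly the cases you'd need it. You claim $r^{2-n-\ell}\notin L^2(r^{n-1}\,dr)$ for all $\ell\ge 0$, and in particular that $\log r\notin L^2(B^2)$. Both assertions are false: $\int_0^1 r^{2(2-n-\ell)}\,r^{n-1}\,dr$ converges whenever $n+2\ell<4$, which includes $(n,\ell)\in\{(1,0),(1,1),(2,0),(3,0)\}$; and $\int_0^1(\log r)^2\,r\,dr<\infty$, so $\log r\in L^2(B^2)$ (your back-up claim that $\log r$ has no finite trace on $\partial B^2$ is also wrong, since $\log 1=0$). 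So the $L^2$ integrability test simply does not rule out the second Euler branch in low dimensions.

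The correct way to discard that branch is either to use $H^1$ rather than $L^2$ (the natural space for the variational characterisation of \eqref{eq:eigenfunction}), since $\int_0^1|\partial_r(r^{2-n-\ell})|^2 r^{n-1}\,dr$ and $\int_0^1 r^{-1}\,r\,dr$ diverge at the origin for every $n\ge 1$ and $\ell\ge 0$; or to observe that $r^{2-n}$ (resp.\ $\log r$ when $n=2$) is a multiple of the fundamental solution of the Laplacian, so it does not solve $-\Delta u=0$ distributionally across $0$ and cannot be an eigenfunction, and more generally $r^{2-n-\ell}\Phi_\ell$ is singular at the origin; or simply to invoke elliptic regularity, which forces eigenfunctions to be smooth across $0$. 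With any of these replacements your argument closes cleanly, and the remaining step (the boundary condition $\phi_\ell'(1)=\phi_\ell(1)$ giving $\ell=1$, together with the identification of $r\Phi_{1,k}$ with the coordinate functions) is fine.
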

\begin{proof}
When $\mu=0$, the equation \eqref{eq:radial eigenfunction higher eigenvalues} can be solved directly. Its admissible (i.e. square integrable) solutions are multiples of
\[
\phi_\ell(r)\doteqdot r^\ell\,,
\]
but the boundary condition rules out all but the linear ones, $\ell=1$. The only admissible spherical harmonics $\Phi$ are then those of degree $\ell=1$.
\end{proof}

\begin{corollary}\label{cor:Liouville}
Let 
$u$ be a non-negative ancient solution to the critical-Robin heat equation on the unit ball. If $u(\cdot,t)=\mathrm{e}^{o(-t)}$ as $t\to-\infty$, then
\[
u(x,t)=A\mathrm{e}^{\lambda_0^2t}\phi_0(\lambda_0\vert x\vert)\;\;\text{for some}\;\; A\in\R\,.
\]
\end{corollary}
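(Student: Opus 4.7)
The plan is to expand $u(\cdot,t)$ in the $L^2(B^n)$-orthonormal basis $\{\phi_k\}_{k\ge 0}$ of eigenfunctions of the critical-Robin Laplacian furnished by standard spectral theory (mentioned in the paragraph preceding Lemma 2.1), and show that only the ground mode survives. Setting $c_k(t)\doteqdot \int_{B^n}u(\cdot,t)\phi_k$ and applying Green's identity, the fact that both $u$ and each $\phi_k$ satisfy $\cd(\,\cdot\,)\cdot\nu=(\,\cdot\,)$ on $\pd B^n$ (with $\nu=x$) makes the boundary integral vanish, yielding
\[
c_k'(t)=\int_{B^n}\phi_k\Delta u=\int_{B^n}u\Delta\phi_k=-\mu_k c_k(t)\,,
\]
so $c_k(t)=a_k\mathrm{e}^{-\mu_k t}$ for some $a_k\in\R$.

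Next I would exploit the subexponential backwards growth. By Parseval, $|c_k(t)|\le \|u(\cdot,t)\|_{L^2(B^n)}=\mathrm{e}^{o(-t)}$ as $t\to-\infty$, so $|a_k|\mathrm{e}^{\mu_k(-t)}=\mathrm{e}^{o(-t)}$; dividing the logarithm by $-t$ and letting $-t\to\infty$ forces $a_k=0$ whenever $\mu_k>0$. By Lemmas 2.1 and 2.2, the only remaining modes come from $\mu_0=-\lambda_0^2$ and the null eigenspace (spanned by the linear coordinate functions), so after absorbing normalization constants,
\[
u(x,t)=A\mathrm{e}^{\lambda_0^2t}\phi_0(\lambda_0|x|)+a\cdot x
\]
for some $A\in\R$ and $a\in\R^n$.

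The final step is to kill the linear part using non-negativity. Fixing $x\in B^n$ and sending $t\to-\infty$, the first term decays to zero, leaving $a\cdot x=\lim_{t\to-\infty}u(x,t)\ge 0$ on the whole ball; since $B^n$ is balanced about the origin, this forces $a=0$. Positivity of the ground state $\phi_0$ together with $u\ge 0$ at late times additionally yields $A\ge 0$, completing the identification.

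The main obstacle is the technical justification of the eigenfunction expansion for a merely subexponentially-growing ancient solution, namely verifying that $u(\cdot,t)\in L^2(B^n)$ for every $t$ (a straightforward consequence of parabolic regularity and the assumed pointwise bound) and that the termwise differentiation leading to the ODE for $c_k$ is legitimate. Once these technicalities are in hand the rest of the argument is entirely elementary.
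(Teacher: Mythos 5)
Your proposal is correct and follows essentially the same route as the paper: expand $u$ in the orthonormal eigenbasis of the critical-Robin Laplacian, observe that the mode coefficients obey $c_k(t)=a_k\mathrm{e}^{-\mu_k t}$, kill the positive modes via the subexponential backwards growth bound, and kill the (linear) null modes via non-negativity of $u$. You simply fill in the details (Green's identity for the ODE satisfied by $c_k$, the $L^2$ bound, the balancedness of $B^n$) that the paper's two-sentence proof leaves implicit.
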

\begin{proof}
We may represent $u$ as
\[
u(x,t)=\sum_{j=0}^\infty A_j\mathrm{e}^{-\mu_j t}\phi_j(x)\,,
\]
where $\{\phi_j\}_{j=0}^\infty$ is an orthonormal basis for $L^2(B^n)$ consisting of eigenfunctions $\phi_j$ of the critical-Robin Laplacian (with eigenvalue $\mu_j$). Since the null modes are linear and $\mu_j>0$ for each $j\ge n+1$, the coefficients of these states must be zero.
\end{proof}

\subsection{Further properties of the ground state}\label{sec:properties of phi}

Recall that the radial ground state $\phi_0$ is given by
\[
\phi_0(\rho)=1+\sum_{j=1}^\infty b_j\rho^{2j}\,,\;\;\text{where}\;\; b_j=\prod_{\ell=1}^j\frac{1}{2\ell(n+2(\ell-1))}\,.
\]
In particular, $\phi_0$ well-defined and analytic on the whole real line, and satisfies
\[
\phi_0''+\frac{n-1}{\rho}\phi_0'=\phi_0
\]
everywhere.\footnote{In the following, it is instructive to keep in mind that, when $n=1$, $\phi_0(\rho)=\cosh\rho$.} We shall need the following basic properties of $\phi_0$.

\begin{lemma}\label{lem:first phi lemma}
The (odd) function $\phi_0'/\phi_0$ is monotone increasing 
and converges to one as $\rho\to\infty$.
\end{lemma}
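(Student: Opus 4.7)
The plan is to work with the logarithmic derivative $w \doteqdot \phi_0'/\phi_0$. The power series $\phi_0(\rho) = 1 + \sum_{j \geq 1} b_j \rho^{2j}$ with all $b_j > 0$ shows immediately that $\phi_0$ is positive and even, that $\phi_0'$ is odd and positive on $(0, \infty)$, and hence that $w$ is odd, smooth on $\R$, and positive on $(0, \infty)$, with $w(0) = 0$ and local expansion $w(\rho) = \rho/n + O(\rho^3)$. It therefore suffices to establish monotonicity and the limit on $\rho > 0$. Using $\phi_0'' + \frac{n-1}{\rho}\phi_0' = \phi_0$, a direct computation from $w' = \phi_0''/\phi_0 - w^2$ yields the Riccati equation
\[
w' = 1 - \frac{n-1}{\rho}\,w - w^2,
\]
and in particular $w'(0^+) = 1/n > 0$.

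For monotonicity I argue by contradiction: let $\rho_0 > 0$ be the first zero of $w'$. Differentiating the Riccati and using $w'(\rho_0) = 0$ yields
\[
w''(\rho_0) = \frac{n-1}{\rho_0^2}\,w(\rho_0),
\]
which for $n \ge 2$ is strictly positive. Thus $w'$ is strictly increasing through $0$ at $\rho_0$, forcing $w'(\rho) < 0$ just to the left of $\rho_0$ and contradicting the choice of $\rho_0$. The degenerate case $n = 1$ is the only real obstacle, since the right-hand side $(n-1)\rho_0^{-2}\,w(\rho_0)$ vanishes and the sign-detection breaks down; there, however, the ODE is solved explicitly by $\phi_0(\rho) = \cosh\rho$, giving $w = \tanh\rho$, for which the desired properties are immediate.

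Given $w' > 0$, the Riccati rearranges to $w(\rho)^2 + \frac{n-1}{\rho}\,w(\rho) < 1$, so $w(\rho)$ is bounded above by the positive root of $X^2 + \frac{n-1}{\rho}X = 1$, which tends to $1$ as $\rho \to \infty$. Combined with monotonicity and positivity, $w(\rho) \to w_\infty \in (0, 1]$. Passing to the limit in the Riccati (noting the middle term vanishes) gives $w'(\rho) \to 1 - w_\infty^2$; if $w_\infty < 1$, this limit would be strictly positive, forcing $w$ to grow linearly in $\rho$ and contradicting the upper bound just established. Therefore $w_\infty = 1$, which completes the proof.
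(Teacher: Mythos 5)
Your proof is correct and proceeds along the same lines as the paper: derive the Riccati equation $w' = 1 - \frac{n-1}{\rho}w - w^2$ for $w = \phi_0'/\phi_0$, rule out a first interior critical point by examining the sign of $w''$ there, and extract the limit from the Riccati. The one genuine difference is in how the critical-point step treats $n=1$. You compute $w''(\rho_0) = \frac{n-1}{\rho_0^2}w(\rho_0)$, which vanishes identically when $n=1$, forcing you to fall back on the explicit solution $\phi_0=\cosh$. The paper instead first observes from the Riccati that $w$ never reaches $1$, and then uses the Riccati to eliminate the $\frac{n-1}{\rho}w$ term from $\rho w''$, arriving at $\rho_0 w''(\rho_0) = 1 - w(\rho_0)^2 > 0$ at a first critical point --- a formulation that is strictly positive for every $n\ge 1$ in one stroke. (Strictly speaking, the paper's preliminary claim that $w$ never reaches $1$ also degenerates at $n=1$, where the sign contradiction disappears and one needs ODE uniqueness; so neither argument is fully uniform, and your explicit fallback is a perfectly reasonable way to plug the gap.) Your treatment of the limit is also a bit more detailed than the paper's, which merely asserts that the Riccati forces $w\to 1$; your extraction of the bound $w^2+\frac{n-1}{\rho}w<1$ and the linear-growth contradiction make this step explicit.
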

\begin{proof}
Setting $\Phi\doteqdot \phi_0'/\phi_0$, we find that
\[
\Phi(\rho)
=\frac{\sum_{j=0}^\infty 2(j+1)b_{j+1}\rho^{2j+1}}{\sum_{j=0}^\infty b_j\rho^{2j}}
\sim\frac{\rho}{n}\;\;\text{for}\;\;\rho\sim 0\,.
\]
Observe also that
\[
\rho\Phi'=\rho(1-\Phi^2)-(n-1)\Phi
\]
and
\[
\rho\Phi''=1-\Phi^2-n\Phi'-2\rho\Phi\Phi'\,.
\]
The first observation implies that $\Phi(\rho)>0=\Phi(0)$ for $\rho$ close to (but above) zero. The second implies that $\Phi$ can never reach one, and the third then implies that 
$\Phi'$ remains positive for all $\rho>0$. The second observation then implies that $\Phi(\rho)\to 1$ as $\rho\to\infty$.
\end{proof}

\begin{lemma}\label{lem:second phi lemma}
The (even) function $\phi'_0/\rho\phi_0$ is monotone decreasing for $\rho>0$ and bounded from above by $1/n$.
\end{lemma}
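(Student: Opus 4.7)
The plan is to reduce both assertions to a single monotonicity statement. Set $\Phi := \phi_0'/\phi_0$ as in Lemma \ref{lem:first phi lemma}, so that $\phi_0'/(\rho\phi_0)$ is the quotient $\Psi(\rho) := \Phi(\rho)/\rho$. The first two terms of the series for $\phi_0$ give $\Phi(\rho) = \rho/n + O(\rho^3)$, so $\Psi$ extends continuously across $\rho = 0$ with $\Psi(0) = 1/n$, and is even because $\phi_0$ is an even power series. Once monotonicity of $\Psi$ on $(0,\infty)$ is established, the bound $\Psi \le 1/n$ will follow automatically.

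Monotonicity $\Psi' < 0$ on $(0,\infty)$ is equivalent to positivity of $K(\rho) := \Phi(\rho) - \rho\Phi'(\rho)$, since $\Psi' = -K/\rho^2$. Carrying the series expansion one term further one finds $\Phi(\rho) = \rho/n - \rho^3/[n^2(n+2)] + O(\rho^5)$, hence $K(\rho) = 2\rho^3/[n^2(n+2)] + O(\rho^5)$; in particular $K(0) = 0$ and $K > 0$ for small $\rho > 0$. It therefore suffices to rule out a first zero $\rho_0 > 0$ of $K$, since at such a point (where $K > 0$ on $(0,\rho_0)$ and $K(\rho_0) = 0$) the derivative would have to satisfy $K'(\rho_0) \le 0$.

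The key calculation is that at any such zero, one has on the contrary $K'(\rho_0) > 0$. Indeed, $K(\rho_0) = 0$ reads $\rho_0\Phi'(\rho_0) = \Phi(\rho_0)$; substituting into the identity $\rho\Phi' = \rho(1-\Phi^2) - (n-1)\Phi$ from Lemma \ref{lem:first phi lemma} yields the scalar relation $n\Phi(\rho_0)/\rho_0 = 1 - \Phi(\rho_0)^2$. Feeding both into the identity $\rho\Phi'' = 1 - \Phi^2 - n\Phi' - 2\rho\Phi\Phi'$ collapses the right-hand side to $-2\Phi(\rho_0)^2$. Since $K' = -\rho\Phi''$ and $\Phi(\rho_0) > 0$ by Lemma \ref{lem:first phi lemma}, this gives $K'(\rho_0) = 2\Phi(\rho_0)^2 > 0$, the desired contradiction.

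The main obstacle is spotting this algebraic cancellation: it is a small ``miracle'' that substituting the two relations into the identity for $\rho\Phi''$ leaves precisely $-2\Phi^2$, making $\rho\Phi''$ strictly negative exactly where it is needed. Once this is noticed, the rest of the argument is routine manipulation of the two identities already provided by Lemma \ref{lem:first phi lemma}.
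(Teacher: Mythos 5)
Your argument is correct, and it takes a genuinely different route from the paper's.

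The paper works directly with $\Psi := \phi_0'/(\rho\phi_0)$: it reads off the upper bound $\Psi < 1/n$ directly from the power-series representation (every nonconstant coefficient of the numerator of $\Psi-\Psi(0)$ is negative), and propagates the initial decay forward using the second-order identity $\rho\Psi'' = -(n+1)\Psi' - 2\rho\Psi^2 - 2\rho^2\Psi\Psi'$ (so that at a would-be interior critical point $\Psi''<0$). You instead reuse the function $\Phi := \phi_0'/\phi_0$ and the two identities already established for it in Lemma \ref{lem:first phi lemma}, reduce both claims to positivity of $K := \Phi - \rho\Phi'$ on $(0,\infty)$, seed this with a short Taylor expansion at $0$, and then rule out a first zero of $K$ via the algebraic collapse $\rho_0\Phi'' = -2\Phi^2$ at such a point. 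The advantages of your route are that it derives the bound $\le 1/n$ as a corollary of monotonicity rather than as a separate fact, it leans only on the local Taylor coefficients instead of a global series manipulation, and it recycles the $\Phi$-identities from the previous lemma rather than computing fresh ones for $\Psi$. The paper's route is slightly more direct for the bound itself. Both rely in the same place on $\Phi>0$ for $\rho>0$ from Lemma \ref{lem:first phi lemma} to get a strict sign. One small presentational note: you could make the conclusion a hair tighter by observing that $K(0)=0$, so "first zero in $(0,\infty)$" should be understood as "first zero past the known strict positivity on a punctured neighbourhood of $0$", which your expansion $K=2\rho^3/[n^2(n+2)]+O(\rho^5)$ does supply.
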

\begin{proof}
If we set $\Phi\doteqdot\phi'_0/\rho\phi_0$, then
\bann
\Phi(\rho)=\frac{\sum_{j=0}^\infty 2(j+1)b_{j+1}\rho^{2j}}{\sum_{j=0}^\infty b_j\rho^{2j}}
\eann
and
\bann
\rho\Phi''={}&-(n+1)\Phi'-2\rho\Phi^2-2\rho^2\Phi\Phi'\,.
\eann
The first observation implies that $\Phi(0)=2b_1=\frac{1}{n}$ and hence
\bann
\Phi(\rho)-\Phi(0)={}&\frac{\sum_{j=0}^\infty 2(j+1)b_{j+1}\rho^{2j}-2b_1\sum_{j=0}^\infty b_j\rho^{2j}}{\sum_{j=0}^\infty b_j\rho^{2j}}\\
={}&\frac{2\sum_{j=0}^\infty \big((j+1)b_{j+1}-b_1b_j\big)\rho^{2j}}{\sum_{j=0}^\infty b_j\rho^{2j}}\\
<{}&0\,.
\eann
The second observation then implies that $\Phi$ remains decreasing for all $\rho>0$.
\end{proof}

\section{Existence}\label{sec:existence}

We are now ready to construct a non-trivial ancient free boundary mean curvature flow in the $(n+1)$-ball. It will be clear from the construction that the solution is rotationally symmetric about the vertical axis, emerges at time negative infinity from the horizontal $n$-ball, and converges at time zero to the point $e_{n+1}$. 

We shall also prove an estimate for the height of the constructed solution (which will be needed to prove its uniqueness).


\subsection{Barriers}


Given $\theta\in (0,\frac{\pi}{2})$, denote by $\mathrm{S}_\theta$ the $n$-sphere centred on the $e_{n+1}$-axis which meets $\pd B^{n+1}$ orthogonally at $\{\cos\theta e+\sin\theta e_{n+1}:e\in S^{n-1}\times \{0\}\}$. That is,
\begin{equation}\label{eq:critical circle}
\mathrm{S}_\theta\doteqdot \left\{(x,y)\in \R^{n}\times\R:\vert x\vert^2+\left(\csc\theta-y\right)^2=\cot^2\theta\right\}\,.
\end{equation}
If we set
\[
\theta^-(t)\doteqdot \arcsin \mathrm{e}^{nt}\;\;\text{and}\;\; \theta^+(t)\doteqdot \arcsin \mathrm{e}^{2nt}\,,
\]
then we find that the inward normal speed of $\mathrm{S}_{\theta^-(t)}$ is no greater than its mean curvature $H^-$ and the inward normal speed of $\mathrm{S}_{\theta^+(t)}$ is no less than its mean curvature $H^+$. The maximum principle and the Hopf boundary point lemma then imply that 
\begin{proposition}\label{prop:circle barriers}
A solution to free boundary mean curvature flow in $B^{n+1}$ which lies below (resp. above) the sphere $\mathrm{S}_{\theta_0}$ at time $t_0$ lies below $\mathrm{S}_{\theta^+(t^+_0+t-t_0)}$ (resp. above $\mathrm{S}_{\theta^-(t^-_0+t-t_0)}$) for all $t>t_0$, where $2nt^+_0=\log\sin\theta_0$ (resp. $nt^-_0=\log\sin\theta_0$).
\end{proposition}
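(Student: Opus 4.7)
The plan is to verify that, under the given scalings, $\mathrm{S}_{\theta^-(t)}$ is a subsolution and $\mathrm{S}_{\theta^+(t)}$ a supersolution of free boundary mean curvature flow, and then invoke the comparison principle. The free boundary condition is automatic for both families: each $\mathrm{S}_\theta$ meets $\partial B^{n+1}$ orthogonally by construction \eqref{eq:critical circle}, so the only thing that needs checking is the bulk inequality between the inward normal speed and the mean curvature, plus a Hopf-type exclusion of boundary contact at the end.

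The calculation I would carry out is routine. Starting from the defining relation $|x|^2+(\csc\theta-y)^2=\cot^2\theta$, spatial differentiation gives the outward unit normal $\nu=\tan\theta(x,\,y-\csc\theta)$ and mean curvature $H=n\tan\theta$, while differentiating in time (with $\theta=\theta(t)$) produces the inward normal speed $y\theta_t/\sin\theta$. For $\theta^-(t)=\arcsin e^{nt}$ one has $\theta^-_t=n\sin\theta^-/\cos\theta^-$, and since $y\le\sin\theta^-$ on $\mathrm{S}_{\theta^-}\cap B^{n+1}$ the subsolution inequality
\[
\frac{y\theta^-_t}{\sin\theta^-}-n\tan\theta^-=n\tan\theta^-\Bigl(\tfrac{y}{\sin\theta^-}-1\Bigr)\le 0
\]
holds. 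For $\theta^+(t)=\arcsin e^{2nt}$ one has $\theta^+_t=2n\sin\theta^+/\cos\theta^+$, and since the minimum height on $\mathrm{S}_{\theta^+}\cap B^{n+1}$ is $(1-\cos\theta^+)/\sin\theta^+$, a short algebraic manipulation gives the supersolution inequality
\[
\frac{y\theta^+_t}{\sin\theta^+}-n\tan\theta^+=n\tan\theta^+\Bigl(\tfrac{2y}{\sin\theta^+}-1\Bigr)\ge n\tan\theta^+\Bigl(\tfrac{1-\cos\theta^+}{\sin\theta^+}\Bigr)^{\!2}\ge 0.
\]

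The time shifts $2nt_0^+=\log\sin\theta_0$ and $nt_0^-=\log\sin\theta_0$ are chosen precisely so that $\theta^\pm(t_0^\pm)=\theta_0$; hence after reparametrizing by $t\mapsto t-t_0+t_0^\pm$ the barrier coincides with $\mathrm{S}_{\theta_0}$ at the initial time $t_0$. The desired inclusion then follows from the comparison principle: at a first interior contact the ordinary parabolic maximum principle provides the contradiction, and at a first boundary contact the Hopf boundary point lemma does, applied using the fact that both the solution and each $\mathrm{S}_{\theta^\pm}$ meet $\partial B^{n+1}$ orthogonally. I expect the only delicate step, as usual in free boundary problems, to be this boundary-contact exclusion, but it is by now standard once orthogonality of both evolving hypersurfaces to $\partial B^{n+1}$ is in place (cf.\ \cite{Stahl1}), which here it is by design of $\mathrm{S}_\theta$.
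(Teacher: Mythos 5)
Your proposal is correct and follows the same route as the paper: compute the outward normal $\nu=\tan\theta\,(x,\,y-\csc\theta)$, the mean curvature $H=n\tan\theta$, and the inward normal speed $y\,\theta_t/\sin\theta$ of $\mathrm{S}_{\theta(t)}$; then use $\tfrac{1-\cos\theta}{\sin\theta}\le y\le\sin\theta$ to verify that the specific choices $\theta^\pm$ make $\mathrm{S}_{\theta^-}$ a subsolution and $\mathrm{S}_{\theta^+}$ a supersolution, with the time shifts chosen so that $\theta^\pm(t_0^\pm)=\theta_0$; finish with the comparison principle in the interior and the Hopf boundary point lemma at $\partial B^{n+1}$, where orthogonality of both families is built in. This is precisely the argument the authors indicate (and carry out in detail in the source), so there is nothing to add.
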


Next, given $\lambda>0$, consider the family $\{\Sigma^\lambda_t\}_{t\in(-\infty,0)}$ of hypersurfaces $\Sigma_t^\lambda$ defined by
\[
\Sigma^\lambda_t\doteqdot \left\{(x,y)\in\R^n\times(0,\tfrac{\pi}{2\lambda}):\sin(\lambda y)=\mathrm{e}^{\lambda^2t}\phi\big(\lambda\vert x\vert\big)\right\}\,,
\]
where $\phi=\phi_0$ is the unique solution to \eqref{eq:negative eigenspace change of variable} with $\mu=\mu_0$ and $\phi(0)=1$. 

Differentiating the defining equation with respect to an arclength parameter $s$ along the profile curve $\Lambda^\lambda_t\doteqdot \Sigma^\lambda_t\cap e_1\wedge e_{n+1}\cap\{x_1>0\}$ yields (with the abuse of notation $x=x_1$)
\begin{equation}\label{eq:tangent for phi curve}
\cos(\lambda y)y_s=\mathrm{e}^{\lambda^2t}\phi'(\lambda x)x_s\,.
\end{equation}
From this we find that the downwards pointing normal to the profile curve is given by
\[
\nu=\left.{\left(\frac{\phi'(\lambda x)}{\phi(\lambda x)},-\frac{\cos(\lambda y)}{\sin(\lambda y)}\right)}\right/{\sqrt{\left(\frac{\phi'(\lambda x)}{\phi(\lambda x)}\right)^2+\left(\frac{\cos(\lambda y)}{\sin(\lambda y)}\right)^2}}\,.
\]
Differentiating \eqref{eq:tangent for phi curve} yields
\bann
\cos(\lambda y)y_{ss}-\mathrm{e}^{\lambda^2t}\phi'{}&(\lambda x)x_{ss}=\lambda\left(\sin(\lambda y)y_s^2+\mathrm{e}^{\lambda^2t}\phi''(\lambda x)x_s^2\right)\\
={}&\lambda\left(\sin(\lambda y)y_s^2+\mathrm{e}^{\lambda^2t}\left[\phi(\lambda x)-\frac{n-1}{\lambda x}\phi'(\lambda x)\right]x_s^2\right).
\eann
From this we find that the curvature of the profile curve is given by
\bann
\kappa={}&-\gamma_{ss}\cdot\nu\\
={}&\lambda\tan(\lambda y)x_s\left(1-\frac{n-1}{\lambda x}\frac{\phi'(\lambda x)}{\phi(\lambda x)}x_s^2\right).
\eann
On the other hand, the rotational curvature $\hat\kappa$ is given by
\bann
\hat\kappa={}&\frac{\nu\cdot e_1}{x}=\tan(\lambda y)\frac{x_s}{x}\frac{\phi'(\lambda x)}{\phi(\lambda x)}\,.
\eann
If we choose the arclength parameter so that $s=0$ corresponds to $x=0$, then we may write $\tau\doteqdot \gamma_s=(\cos\theta,\sin\theta)$ with $\theta\in[0,\frac{\pi}{2})$ increasing in $s$. The mean curvature of $\Sigma^\lambda_t$ is then given, along the profile curve $\Lambda^\lambda_t$, by
\[
H=\lambda\tan(\lambda y)\cos\theta\left(1+(n-1)\sin^2\theta\frac{\phi'(\lambda x)}{\lambda x\phi(\lambda x)}\right).
\]
On the other hand,
\[
\cos(\lambda y)y_t=\mathrm{e}^{\lambda^2t}\phi'(\lambda x)x_t+\lambda\mathrm{e}^{\lambda^2t}\phi(\lambda x)
\]
so that
\[
-\gamma_t\cdot\nu=\lambda\tan(\lambda y)\cos\theta\le H\,.
\]
That is, $\{\Sigma^\lambda_t\}_{t\in(-\infty,0)}$ is a subsolution to mean curvature flow.

\begin{proposition}\label{prop:Angenent oval barriers}
Given $\theta\in (0,\frac{\pi}{2})$, there exists a unique solution $\lambda(\theta)$ to
\[
\frac{\phi'(\lambda \cos\theta)}{\phi(\lambda \cos\theta)}\cos\theta=\frac{\cos(\lambda \sin\theta)}{\sin(\lambda \sin\theta)}\sin\theta\,.
\]
Let $\{M_t\}_{t\in [\alpha,\omega)}$ be a rotationally symmetric solution to free boundary mean curvature flow in $B^{n+1}$. Define $\theta_\alpha\in(0,\frac{\pi}{2})$ by
\[
\sin\theta_\alpha=\{X\cdot e_{n+1}:X\in \pd M_\alpha\}\,.
\]
If $\lambda\le\lambda(\theta_\alpha)$ and $M_\alpha$ lies above $\Sigma^{\lambda}_{s}$, then $M_t$ lies above $\Sigma^{\lambda}_{s+t-\alpha}$ for all $t\in(\alpha,\omega)\cap(-\infty,\alpha-s)$.
\end{proposition}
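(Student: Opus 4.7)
The plan is to establish the scalar equation for $\lambda(\theta)$ and then carry out a maximum-principle comparison of $M$ against $\Sigma^\lambda$.

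For the first part, write the defining equation as $F(\lambda)=0$ with
\[
F(\lambda)\doteqdot\cos\theta\cdot\frac{\phi'_0(\lambda\cos\theta)}{\phi_0(\lambda\cos\theta)}-\sin\theta\cot(\lambda\sin\theta),
\]
restricted to $\lambda\in(0,\pi/(2\sin\theta))$. Lemma~\ref{lem:first phi lemma} says that $\phi_0'/\phi_0$ rises strictly and monotonically from $0$ to $1$, so the first term of $F$ rises from $0$ to $\cos\theta$ as $\lambda\to\infty$. The cotangent term falls strictly from $+\infty$ to $0$ across the given interval. Hence $F$ is strictly increasing from $-\infty$ to $\cos\theta>0$, so has exactly one root, which we call $\lambda(\theta)$.

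For the comparison statement I argue by contradiction, using that $\{\Sigma^\lambda_t\}$ is a subsolution ($-\gamma_t\cdot\nu\le H$, with strict inequality off the axis of symmetry when $n\ge 2$) as was computed just before the statement. Suppose the ordering first fails at some time $t_*>\alpha$; then $M_{t_*}$ and $\Sigma^\lambda_{s+t_*-\alpha}$ touch at a point $p$. An interior first contact is incompatible with the strict parabolic maximum principle applied to the graphical gap between the two profiles (or, in the degenerate $n=1$ case, by a direct two-curve comparison as in \cite{FBCSF}).

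The delicate case is a first contact on $\partial B^{n+1}$, say $p=(\cos\theta_*\omega,\sin\theta_*)$. There the free boundary condition pins the profile tangent of $M$ at $p$ to the outward radial direction, hence it has slope $\tan\theta_*$, while the tangent of $\Lambda^\lambda_{s+t_*-\alpha}$ computed just above has slope
\[
\tan(\lambda\sin\theta_*)\,\frac{\phi'_0(\lambda\cos\theta_*)}{\phi_0(\lambda\cos\theta_*)}.
\]
By the defining equation of $\lambda(\theta_*)$ together with the monotonicities from the first part, the inequality $\lambda\le\lambda(\theta_*)$ is precisely the condition that this slope be no larger than $\tan\theta_*$. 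Strict inequality in slopes then forces $\Sigma^\lambda>M$ on a one-sided neighbourhood of $p$ inside $B^{n+1}$, contradicting the assumed ordering; the borderline case $\lambda=\lambda(\theta_*)$ is closed off by the strict subsolution inequality applied at second order.

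The main obstacle is therefore the transfer of the hypothesis $\lambda\le\lambda(\theta_\alpha)$ into the required inequality $\lambda\le\lambda(\theta_*)$ at the contact point. Since $\partial M_t\subset\partial B^{n+1}$ contracts toward $e_{n+1}$ under the flow, we have $\theta_{t_*}\ge\theta_\alpha$, so it suffices to show that $\lambda(\cdot)$ is non-decreasing on $(0,\pi/2)$. I would verify this by implicitly differentiating the equation defining $\lambda(\theta)$ and appealing to Lemma~\ref{lem:first phi lemma} to sign the resulting expression.
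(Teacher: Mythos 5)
Your first part (existence and uniqueness of $\lambda(\theta)$) is correct and matches what is implicit in the paper: for fixed $\theta$, the quantity $F(\lambda)=\cos\theta\,\Phi(\lambda\cos\theta)-\sin\theta\cot(\lambda\sin\theta)$ (where $\Phi=\phi'_0/\phi_0$) is strictly increasing in $\lambda$ on $(0,\pi/(2\sin\theta))$ by Lemma~\ref{lem:first phi lemma}, and it runs from $-\infty$ to a positive value. The framework of the comparison argument is also the right one: interior contact is excluded because $\Sigma^\lambda$ is a subsolution, and boundary contact reduces to a slope comparison between the free-boundary condition on $M$ and the explicit slope $\tan(\lambda\sin\theta_*)\,\Phi(\lambda\cos\theta_*)$ of $\Lambda^\lambda$, closed off by the Hopf lemma; your observation that the good sign is precisely $\lambda\le\lambda(\theta_*)$ (equivalently $f(\lambda,\theta_*)\le 0$) is correct.

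The gap is the final transfer step. You reduce the proof to the assertion that $\theta\mapsto\lambda(\theta)$ is non-decreasing and say you would verify this by implicit differentiation and Lemma~\ref{lem:first phi lemma}. This is left undone, and it is not a routine verification; indeed, it is the substantive content of the boundary comparison. Worse, the assertion does not appear to hold in the very dimensions this paper is about. Expanding $\lambda f(\lambda_0,\theta)$ to second order in $\theta$ (using $\Phi(\lambda_0)=1/\lambda_0$ and $\rho\Phi'=\rho(1-\Phi^2)-(n-1)\Phi$) gives
\[
\lambda_0 f(\lambda_0,\theta)=\frac{3(n-1)-\lambda_0^2}{6}\,\theta^2+O(\theta^4).
\]
For $n=1$ this is negative, so $\lambda(\theta)>\lambda_0=\lambda(0^+)$ for small $\theta$, consistent with monotone increase (and with the $\lambda\searrow\lambda_0$ sublimit in \S3.4). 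But already for $n=2,3$ one has $\lambda_0^2<3(n-1)$, so $f(\lambda_0,\theta)>0$ and hence $\lambda(\theta)<\lambda_0$ for small $\theta$ — the function $\lambda(\cdot)$ is \emph{decreasing} near $\theta=0$, and numerically it appears to decrease from $\lambda_0>\pi/2$ down to $\pi/2$ on the whole interval when $n\ge 2$. (Incidentally, the paper's own one-line remark that $\lambda(\theta)<\lambda_0$ points the same way for $n\ge 2$ and is inconsistent with your monotonicity claim.) So the step ``$\theta_*\ge\theta_\alpha$ hence $\lambda\le\lambda(\theta_\alpha)\le\lambda(\theta_*)$'' fails for $n\ge 2$. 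What is actually required is a one-variable propagation statement — for fixed $\lambda$, the inequality $f(\lambda,\theta)\le 0$, once true, persists as $\theta$ increases — which is what the paper defers to the argument of \cite[Lemma 2.2]{FBCSF}, and that statement needs a genuine proof, not just a sign of $d\lambda/d\theta$.

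A secondary imprecision: you say ``strict inequality in slopes then forces $\Sigma^\lambda>M$ on a one-sided neighbourhood,'' but the direction of strictness you want is the opposite of what automatically holds at a first interior touching time. At first contact the ordering already forces $y_M'(\cos\theta_*)\le y_\Sigma'(\cos\theta_*)$; the useful input is the a priori bound $f(\lambda,\theta_*)\le 0$, which gives $y_M'\ge y_\Sigma'$ at that point, and then the Hopf lemma (using the strict subsolution property of $\Sigma^\lambda$) contradicts equality. As written, your phrasing reverses the roles of hypothesis and conclusion, though the intended argument is salvageable once stated carefully.
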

\begin{proof}
Due to Lemma \ref{lem:first phi lemma}, we may proceed as in \cite[Lemma 2.2 and Proposition 2.3]{FBCSF}.
\end{proof}
Note also that, by the defining property \eqref{eq:defining equation lambda_0} of $\lambda_0$, we have the inequality $\lambda(\theta)<\lambda_0$.

We will also require the following technical properties of $\Sigma^\lambda$.
\begin{lemma}\label{lem:gradient conds on initial surface}
The mean curvature $H$ of the hypersurface $\Sigma^\lambda$ satisfies
\[
\cd H\cdot X\ge 0\;\;\text{and}\;\; \limsup_{\lambda\to\lambda_0}\max_{\overline \Sigma{}^\lambda}\vert\cd \log H\vert\le 1\,.
\]
\end{lemma}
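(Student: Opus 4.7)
The plan is to reduce both claims to one-dimensional computations along the rotationally symmetric profile curve $\Lambda^\lambda$, exploiting the explicit mean-curvature formula
\[
H = \lambda\tan(\lambda y)\cos\theta\bigl(1 + (n-1)\sin^2\theta\,\Psi(x)\bigr),\qquad \Psi(x)\doteqdot \frac{\phi'(\lambda x)}{\lambda x\,\phi(\lambda x)},
\]
together with $x_s=\cos\theta$, $y_s=\sin\theta$, and $\theta_s=\kappa=\lambda\tan(\lambda y)\cos\theta\bigl[1-(n-1)\cos^2\theta\,\Psi(x)\bigr]$ derived in the text.

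For the first claim, rotational symmetry gives $\cd H = H_s\,\tau$ along the profile with $\tau=(\cos\theta,\sin\theta)$, so
\[
\cd H\cdot X = H_s\bigl(x\cos\theta+y\sin\theta\bigr) = \tfrac12 H_s\,\tfrac{d}{ds}|X|^2.
\]
Since $\tau$ and $X$ both lie in the first quadrant of the $e_1e_{n+1}$-plane (the profile sweeps outward from its apex on the $e_{n+1}$-axis), their inner product is non-negative, so it suffices to show $H_s\ge 0$. To this end I would differentiate the formula for $H$ in $s$ and group the resulting terms into manifestly non-negative pieces using the estimate $0<\Psi\le 1/n$ of Lemma \ref{lem:second phi lemma} and the inequalities $0<\phi'/\phi<1$ and $(\phi'/\phi)'>0$ of Lemma \ref{lem:first phi lemma}; the factor $\tan(\lambda y)$ is the only place the time slice enters and it is manifestly positive.

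For the second claim, rotational symmetry again gives $|\cd\log H|=|H_s|/H$ on the profile. As $\lambda\nearrow\lambda_0$, Proposition \ref{prop:Angenent oval barriers} forces the maximum boundary angle $\theta_{\max}\to 0$, so the profile flattens toward the equatorial disc and is well-approximated by the linearized graph $y\approx\lambda^{-1}e^{\lambda^2 t}\phi(\lambda r)$ over $B^n$. Using the radial ODE $\phi''+(n-1)\rho^{-1}\phi'=\phi$, the Laplacian of the graph height equals $\lambda e^{\lambda^2 t}\phi(\lambda r)$, hence $H\sim\lambda e^{\lambda^2 t}\phi(\lambda r)$ and
\[
|\cd\log H|\;\sim\;\lambda\,\frac{\phi'(\lambda r)}{\phi(\lambda r)}.
\]
By the monotonicity of $\phi'/\phi$ (Lemma \ref{lem:first phi lemma}), this is maximized in $r\in[0,1]$ at $r=1$, where by \eqref{eq:defining equation lambda_0} it tends to $\lambda_0\,(\phi'/\phi)(\lambda_0)=1$ as $\lambda\to\lambda_0$.

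The hard part will be justifying the linearization rigorously, showing that the corrections to $H_s/H$ arising from the nonlinear formula (the factors of $\cos\theta$, the difference between $\tan(\lambda y)$ and $\lambda y$, the $(n-1)\sin^2\theta\,\Psi$ piece of $F$, and the discrepancy between the arclength parameter $s$ and the horizontal parameter $r$) are uniformly $o(1)$ as $\lambda\to\lambda_0$. Tracking these all reduces to a quantitative bound $\sup_{\overline\Sigma^\lambda}\sin\theta\to 0$, which I would extract from the profile equation by an implicit-function argument once $\theta_{\max}$ is controlled via Proposition \ref{prop:Angenent oval barriers}.
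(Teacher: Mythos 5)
Your reduction of the first claim to $H_s\ge 0$ via $\cd H\cdot X = H_s\,\tau\cdot X$ with $\tau\cdot X = x\cos\theta + y\sin\theta\ge 0$ is exactly the right framing and matches the paper's implicit reduction. However, the actual content of that claim is the computation you defer to ``grouping into manifestly non-negative pieces.'' The paper's proof does this by splitting $H_s = \kappa_s + (n-1)\hat\kappa_s$ and showing each summand is separately non-negative: $\hat\kappa_s$ carries a factor $1 - n\Phi(\lambda x) + (n-1)\sin^2\theta\,\Phi(\lambda x)\ge 0$ by Lemma~\ref{lem:second phi lemma} (this is where $\Phi\le 1/n$ is used), while $\kappa_s$ is written as a sum of non-negative products using $1-(n-1)\cos^2\theta\,\Phi\ge 1/n>0$, $\Phi\ge 0$, and $\Phi'\le 0$. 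That these terms group cleanly is not automatic and is the substance of the lemma; your sketch identifies the right tools (Lemmas~\ref{lem:first phi lemma} and \ref{lem:second phi lemma}) but leaves the decisive algebra undone.

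For the second claim, your linearization plan is a genuinely different route and, as you correctly flag, the hard part is exactly the error control. The paper avoids the implicit-function machinery and the graph linearization entirely: it computes $(\log H)_s$ in closed form, drops the two terms involving $\kappa$ and $\Phi'$ using $\kappa\le H$ and $\Phi'\le 0$, and observes that what remains is
\[
\frac{\lambda\sin\theta}{\tan(\lambda y)} \;+\; O(\sin\theta) \;=\; \lambda\cos\theta\,\frac{\phi'(\lambda x)}{\phi(\lambda x)} \;+\; O(\sin\theta)\,,
\]
so the error terms vanish uniformly because $\theta\to 0$ as $\lambda\to\lambda_0$ (which follows from the definition of $t_\lambda$ and $\lambda(\theta)<\lambda_0$, no implicit-function argument needed), and the main term is handled by Lemma~\ref{lem:first phi lemma} and \eqref{eq:defining equation lambda_0} just as you say. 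Your approach would land in the same place after a more circuitous error analysis, so the direct computation of $(\log H)_s$ is the cleaner route; the quantitative statement ``$\sup\sin\theta\to 0$'' that you want is already available without an implicit-function step. In short: correct reduction and correct target estimate at the boundary, but the proof as written is missing the two computations that the paper actually supplies.
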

\begin{proof}
If we set $\Phi(\rho)\doteqdot \phi'(\rho)/\rho\phi(\rho)$, then
\bann
\hat\kappa_s={}&\frac{\cos\theta}{x}(\kappa-\hat\kappa)\\
={}&\frac{\cos\theta}{x}\left(\lambda\tan(\lambda y)\cos\theta\left[1-n\Phi(\lambda x)+(n-1)\sin^2\theta\Phi(\lambda x)\right]\right)
\eann
and 
\bann
\kappa_s={}&\big(\lambda^2\sec^2(\lambda y)\cos\theta\sin\theta-\lambda\tan(\lambda y)\sin\theta \kappa\big)(1-(n-1)\cos^2\theta\Phi(\lambda x))\\
{}&+(n-1)\lambda\tan(\lambda y)\cos\theta\big(2\cos\theta\sin\theta\Phi(\lambda x)\kappa-\lambda\cos^3\theta\Phi'(\lambda x)\big)\\
={}&\big(\lambda^2\cos\theta\sin\theta+(n-1)\lambda^2\tan^2(\lambda y)\sin\theta\cos^3\theta\Phi(\lambda x)\big)\\
{}&\hspace{200pt}\cdot(1-(n-1)\cos^2\theta\Phi(\lambda x))\\
{}&+(n-1)\lambda\tan(\lambda y)\cos\theta\big(2\cos\theta\sin\theta\Phi(\lambda x)\kappa-\lambda\cos^3\theta\Phi'(\lambda x)\big)\,.
\eann
These are both positive for $\theta>0$ by Lemma \ref{lem:second phi lemma}, which implies the first claim.

Next, estimating (in the \emph{second} term below) 
\[
\kappa\le H=\lambda\tan(\lambda y)\cos\theta(1+(n-1)\sin^2\theta\Phi(\lambda x))
\]
and $\Phi'(\rho)\le 0$ for $\rho\ge 0$, we observe that 
\bann
(\log H)_s={}&\log\big(\lambda\tan(\lambda y)\cos\theta\big)_s+\log\big(1+(n-1)\sin^2\theta\Phi(\lambda x)\big)_s\\
={}&\frac{\lambda^2(1+\tan^2(\lambda y))\cos\theta\sin\theta-\lambda\tan(\lambda y)\sin\theta\kappa}{\lambda\tan(\lambda y)\cos\theta}\\
{}&+(n-1)\frac{2\sin\theta\cos\theta\kappa\Phi(\lambda x)+\lambda\sin^2\theta\cos\th\Phi'(\lambda x)}{1+(n-1)\sin^2\theta\Phi(\lambda x)}\\
\le{}&
\frac{\lambda\sin\theta}{\tan(\lambda y)}+(n-1)\lambda\tan(\lambda y)\cos^2\theta\sin\theta\Phi(\lambda x)\\
{}&+2(n-1)\lambda\tan(\lambda y)\sin\theta\cos^2\theta\Phi(\lambda x)\,.
\eann
The second and third terms approach zero uniformly as $\lambda\to\lambda_0$ since $\theta$ does. The first may be rewritten as
\[
\frac{\lambda\sin\theta}{\tan(\lambda y)}=\frac{\phi'(\lambda x)}{\phi(\lambda x)}\lambda\cos\theta \,.
\]
The second claim now follows from Lemma \ref{lem:first phi lemma} and the identity \eqref{eq:defining equation lambda_0}.
\end{proof}

\subsection{Old-but-not-ancient solutions}
Given $\rho>0$, choose a hypersurface $M^\rho$ in $B^{n+1}$ which satisfies the following properties:
\begin{itemize}
\item $M^\rho$ is rotationally symmetric about the $e_{n+1}$-axis,
\item $M^\rho$ meets $\pd B^{n+1}$ orthogonally at $\{\cos\rho e+\sin\rho e_{n+1}:e\in S^{n-1}\times\{0\}\}$,
\item $M^\rho\cap B^{n+1}$ is the relative boundary of a convex region $\Omega^\rho\subset B^{n+1}$, and
\item $\cd H^\rho\cdot X\ge 0$, where $X$ denotes the position vector.
\end{itemize}
For example, we could take $M^\rho\doteqdot \Sigma^{\lambda_\rho}_{t_\rho}$, where $\lambda_\rho\doteqdot \lambda(\rho)$ and
\[
-t_\rho=\lambda_\rho^{-2}\log\left(\frac{\phi(\lambda_\rho\cos\rho)}{\sin(\lambda_\rho\sin\rho)}\right)\,.
\]

Recall ing the notation from \eqref{eq:critical circle}, observe that the sphere $\mathrm{S}_{\theta_\rho}$ defined by
\[
\sin\theta_\rho=\frac{2\sin\rho}{1+\sin^2\rho}
\]
is tangent to the plane $\{X\in \R^{n+1}:X\cdot e_{n+1}=\sin\rho\}$, and hence lies above $M^\rho$.

Work of Stahl \cite{Stahl2,Stahl1} now yields the following \emph{old-but-not-ancient solutions}.

\begin{lemma}\label{lem:old-but-not-ancient}
For each $\rho\in(0,\frac{\pi}{2})$, there exists a smooth solution\footnote{Given by a one parameter family of immersions $X:\overline M\times [\alpha_\rho,0)\to \overline B{}^{n+1}$ satisfying $X\in C^\infty(\overline M\times (\alpha_\rho,0))\cap C^{2+\beta,1+\frac{\beta}{2}}(\overline M\times [\alpha_\rho,0))$ for some $\beta\in(0,1)$.} $\{M^\rho_t\}_{t\in[\alpha_\rho,0)}$ to free boundary mean curvature flow in $B^{n+1}$ which satisfies the following properties:
\begin{itemize}
\item $M^\rho_{\alpha_\rho}=M^\rho$,
\item $M^\rho_t$ is convex and locally uniformly convex for each $t\in(\alpha_\rho,0)$,
\item $M^\rho_t$ is rotationally symmetric about the $e_{n+1}$-axis for each $t\in(\alpha_\rho,0)$,
\item $M_t^\rho\to e_{n+1}$ uniformly as $t\to 0$,
\item $\cd H^\rho\cdot X\ge 0$, and
\item $\alpha_\rho<\frac{1}{2n}\log\left(\frac{2\sin\rho}{1+\sin^2\rho}\right)\to-\infty$ as $\rho\to 0$.
\end{itemize}
\end{lemma}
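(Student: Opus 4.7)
The approach is to invoke Stahl's short-time existence and asymptotic analysis \cite{Stahl1,Stahl2} for convex free-boundary mean curvature flow on the sphere starting from $M^\rho$, translate time so that extinction occurs at $t=0$, and then verify the remaining claims by uniqueness (for symmetry), a maximum-principle argument (for the monotonicity $\cd H\cdot X\ge 0$), and the sphere-barrier comparison of Proposition \ref{prop:circle barriers} (for the lifespan bound). Since $M^\rho$ is a $C^{2,\beta}$, convex, rotationally symmetric hypersurface meeting $\pd B^{n+1}$ orthogonally, Stahl's theory produces a maximal smooth solution $\{M^\rho_t\}_{t\in[0,T_\rho)}$ with the stated regularity at the initial time; his convexity preservation together with the strong maximum principle for the evolving second fundamental form (compatible with reflection at the totally umbilic barrier $\pd B^{n+1}$) ensures that each $M^\rho_t$ is convex and becomes locally uniformly convex at all interior times, and his extinction analysis yields contraction to a round point on $\pd B^{n+1}$ as $t\to T_\rho^-$. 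Set $\alpha_\rho\doteqdot -T_\rho$ and relabel; uniqueness plus the rotational symmetry of $M^\rho$ then makes each $M^\rho_t$ rotationally symmetric, so the extinction point lies on the $e_{n+1}$-axis and, being in $\ov{\Omega^\rho}$, must equal $e_{n+1}$.

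For the monotonicity, set $f\doteqdot\cd H\cdot X$. Under MCF one has $(\pd_t-\Delta)X=0$ and $(\pd_t-\Delta)H=\vert A\vert^2H$; commuting $\nabla$ with the heat operator and applying the product rule yields a linear parabolic equation of the form $(\pd_t-\Delta)f=\vert A\vert^2f+L(\cd f)$ for some bounded first-order operator $L$. At a free-boundary point $p\in\pd M^\rho_t$, the orthogonality condition gives $X(p)=p=\eta(p)$, where $\eta$ is the outward conormal of $M^\rho_t$, so $f=\cd_\eta H$ there; a standard computation for free-boundary MCF on the totally umbilic unit sphere (see \cite{Stahl1}) then produces a Robin-type boundary condition $\cd_\eta f=\lambda f$ with bounded $\lambda$, compatible with the Hopf boundary point lemma. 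Since $\cd H^\rho\cdot X\ge 0$ at $t=\alpha_\rho$ by assumption on $M^\rho$, the parabolic maximum principle propagates this non-negativity forward to all later times.

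For the lifespan estimate, the sphere $\mathrm{S}_{\theta_\rho}$, tangent to $\{X\cdot e_{n+1}=\sin\rho\}$ from above, is contained in the convex region $\Omega^\rho$, so $M^\rho$ lies below $\mathrm{S}_{\theta_\rho}$ in the sense of Proposition \ref{prop:circle barriers}. That proposition then forces $M^\rho_t$ to strictly enclose the shrinking sphere $\mathrm{S}_{\theta^+(t_0^++t-\alpha_\rho)}$ (with $2nt_0^+=\log\sin\theta_\rho$) so long as the barrier is nontrivial, because $\mathrm{S}_{\theta^+}$ is a strict supersolution to MCF (the inequality $y\ge\tan(\theta^+/2)>\sin\theta^+/2$ makes the inward-speed inequality strict on the interior of the barrier). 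As $\mathrm{S}_{\theta^+(s)}$ shrinks to $e_{n+1}$ at $s=0$, the flow must still be alive (and enclose a neighborhood of $e_{n+1}$) at the shifted time $t=\alpha_\rho-t_0^+$; combined with extinction at $t=0$, this forces $\alpha_\rho-t_0^+<0$, i.e., $\alpha_\rho<\frac{1}{2n}\log\bigl(\frac{2\sin\rho}{1+\sin^2\rho}\bigr)$, which tends to $-\infty$ as $\rho\to 0$.

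\textbf{Main obstacle.} The most delicate step is verifying that the boundary condition for $f=\cd H\cdot X$ is compatible with the Hopf lemma: this uses the identification $X=\eta$ at $\pd M^\rho_t$ together with the curvature of the barrier $\pd B^{n+1}$, and requires computing the boundary derivative of $f$ with the correct sign for the maximum principle to apply. The remaining steps rely on (by now) standard results of Stahl and on the already-established strict supersolution nature of the sphere barriers.
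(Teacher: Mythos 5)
Your overall strategy mirrors the paper's: use Stahl's existence and convexity theory, time-translate, invoke uniqueness for rotational symmetry, run a maximum principle for $\cd H\cdot X\ge 0$, and use the sphere barrier of Proposition~\ref{prop:circle barriers} for the lifespan estimate. The lifespan argument and the symmetry/extinction arguments are correct and essentially identical to the paper.

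However, there is a genuine gap in the maximum-principle step. You assert that $f\doteqdot\cd H\cdot X$ satisfies a linear parabolic equation of the form $(\pd_t-\Delta)f=\vert A\vert^2 f+L(\cd f)$ with $L$ a bounded first-order operator. This is not true: computing $(\pd_t-\Delta)(\cd H\cdot X)$ via the Bochner identity produces the term $H\,\cd\vert A\vert^2\cdot X$, coming from $\cd\big((\pd_t-\Delta)H\big)=\cd(\vert A\vert^2 H)$, and this term is neither of the form $\vert A\vert^2 f$ nor a bounded first-order operator applied to $f$; it involves third-order derivatives of the immersion that are not a priori controlled. This uncontrolled term is precisely why the paper does not work with $\cd H\cdot X$ directly. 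Instead, the paper considers $\inner{\cd H}{\vec r}$ with $\vec r=X^\top/\vert X^\top\vert$ a \emph{unit} tangent vector, exploits the rotational symmetry (which forces $\cd H$ to be radial, so that at a first interior zero one has $\inner{\cd H}{\vec r}=-\vert\cd H\vert$ in a spacetime neighbourhood), and then applies the standard Kato-type inequality $(\pd_t-\Delta)\vert\cd H\vert\le c\vert A\vert^2\vert\cd H\vert$ together with an exponential penalty $\varepsilon\,\mathrm{e}^{(C_\sigma c+1)(t-\alpha_\rho)}$ on a finite time interval, where $C_\sigma=\max\vert A\vert^2$.

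A secondary issue: the Robin-type boundary condition $\cd_\eta f=\lambda f$ you invoke is both unverified and unnecessary. By Stahl's Proposition 2.1 (cited in the paper), one has the direct identity $\cd H\cdot X=\cd_\eta H=H>0$ at $\pd M^\rho_t$, so the test function is strictly positive on the boundary; a first interior zero therefore cannot occur there, and the Hopf lemma in the form you sketch is not needed. You correctly identified this boundary step as the delicate point, but the fix is simpler than you expected, while the interior evolution equation is the part that actually breaks down.
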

\begin{proof}
Existence of a maximal solution to mean curvature flow out of $M^\rho$ which meets $\pd B^{n+1}$ orthogonally was proved by Stahl \cite[Theorem 2.1]{Stahl1}. Stahl also proved that this solution remains convex and locally uniformly convex and shrinks to a point on the boundary of $B^{n+1}$ at the final time (which is finite) \cite[Proposition 1.4]{Stahl2}. We obtain $\{M_t^\rho\}_{t\in[\alpha_\rho,0)}$ by time-translating Stahl's solution. 

By uniqueness of solutions (or the Alexandrov reflection principle) $M^\rho_t$ remains rotationally symmetric about the $e_{n+1}$-axis for $t\in(\alpha_\rho,0)$, so the final point is $e_{n+1}$.

The rotational symmetry also implies that $\cd H=0$ at the point $p_t\doteqdot M^\rho_t\cap\R e_{n+1}$ for all $t\in[\alpha_\rho,0)$. By \cite[Proposition 2.1]{Stahl2}, $\cd H\cdot X=H>0$ at the boundary for all $t\in(\alpha_\rho,0)$. The maximum principle now implies that $\cd H\cdot X\ge 0$. To see this, recall that
\[
(\pd_t-\Delta)\vert \cd H\vert\le c\vert A\vert^2\vert \cd H\vert
\]
where $c$ is a constant that depends only on $n$, and, given any $\sigma\in (\alpha_\rho,0)$ and any $\varepsilon>0$, consider the function
\[
v_{\sigma,\varepsilon}\doteqdot\left\{\begin{aligned}\inner{\cd H}{\vec r}+\varepsilon\mathrm{e}^{(C_\sigma c+1)(t-\alpha_\rho)}\;\;\text{on}\;\; B^{n+1}\setminus \R e_{n+1}\\
\varepsilon\mathrm{e}^{(C_\sigma c+1)(t-\alpha_\rho)}\;\;\text{on}\;\; \R e_{n+1}\,,\end{aligned}\right.
\]
where $C_\sigma\doteqdot\max_{t\in [\alpha_\rho,\sigma]}\max_{\overline M{}_t^\rho}\vert A\vert^2$ and $\vec r\doteqdot X^\top/\vert X^\top\vert$. Note that $v_{\sigma,\varepsilon}$ is continuous. Observe that $v_{\sigma,\varepsilon}$ is no less than $\varepsilon$ on $\pd M_t^\rho$ for all $t$, on the axis off rotation for all $t$, and everywhere at the initial time. Thus, if $v_{\sigma,\varepsilon}$ is not positive everywhere in $\overline M^\rho\times[\alpha_\rho,\sigma]$, then there must be a first time $t_0\in(0,\sigma]$ and an off-axis interior point $x_0$ such that $v_{\sigma,\varepsilon}(x_0,t_0)=0$ and $v_{\sigma,\varepsilon}\ge 0$ in a small parabolic neighbourhood of $(x_0,t_0)$. Since $\inner{\cd H}{\vec r}|_{(x_0,t_0)}<0$, we find that $\inner{\cd H}{\vec r}=-\vert \cd H\vert$ in a small spacetime neighborhood of $(x_0,t_0)$. But then, at $(x_0,t_0)$,
\bann
0\ge{}& (\pd_t-\Delta)v_{\sigma,\varepsilon}\\
\ge{}&-C_\sigma c\vert \cd H\vert+\varepsilon(C_\sigma c+1)\mathrm{e}^{(C_\sigma c+1)(t-\alpha_\rho)}\\
={}&\varepsilon\mathrm{e}^{(C_\sigma c+1)(t-\alpha_\rho)}\\
>{}&0\,,
\eann
which is absurd. So $v_{\varepsilon,\sigma}$ is indeed positive in $\overline M{}_t^\rho$ for all $t\in [\alpha_\rho,\sigma]$. Taking $\varepsilon\to 0$ and $\sigma\to 0$ then implies that $\cd H\cdot X\ge 0$.

Since $\mathrm{S}_{\theta_\rho}\subset\Omega^\rho$, the final property follows from Proposition \ref{prop:circle barriers}.
\end{proof}

We now fix $\rho>0$ and drop the super/subscript $\rho$. Denote by $\Gamma_t=M_t\cap e_1\wedge e_{n+1}\cap\{x_1>0\}$ the profile curve of $M_t$ and 
%
%
set
\[
p_t\doteqdot M_t\cap \R e_{n+1}\,,\;\; q_t\doteqdot \pd B^{n+1}\cap \Gamma_t\,,
\]
\[
\underline H(t)\doteqdot \min_{M_t} H= H(p_t)\;\;\text{and}\;\;\overline H(t)\doteqdot \max_{M_t} H= H(q_t)\,,
\]
and define $\underline y(t)$, $\overline y(t)$ and $\overline\theta(t)$ by
\[
p_t=\underline y(t)e_{n+1}\,,\;\;q_t=\cos\overline\theta(t)e_1+\sin\overline\theta(t)e_{n+1}\,,\;\;\text{and}\;\;\overline y(t)=\sin\overline\theta(t)\,.
\]

\begin{lemma}\label{lem:crude estimates}
Each old-but-not-ancient solution satisfies
\begin{equation}\label{eq:kappa bounds}
\underline H\le n\tan\overline\theta\le \overline H\,,
\end{equation}
\begin{equation}\label{sin theta upper bound}
\sin\overline\theta\le e^{nt}\,,
\end{equation}
and
\begin{equation}\label{eq:min y lower bound}
\frac{\sin\overline\theta}{1+\cos\overline\theta}\le \underline y\le \sin\overline\theta\,.
\end{equation}
\end{lemma}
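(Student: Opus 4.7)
The plan is to dispose of the three inequalities in the order they are stated, using the \emph{monotonicity of $H$ along the profile} as a recurring tool. Since $\cd H \cdot X \ge 0$ (from Lemma \ref{lem:old-but-not-ancient}) and $X \cdot \tau = x\cos\theta + y\sin\theta \ge 0$ along the profile, the function $H|_{\Gamma_t}$ is non-decreasing in arclength; in particular $\underline H = H(p_t)$ and $\overline H = H(q_t)$. Since $dx/ds = \cos\theta > 0$ for $\theta\in [0,\overline\theta)$, the profile is a graph over $x\in[0,\cos\overline\theta]$, and $H=H(x)$ is non-decreasing in $x$.

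For \eqref{eq:kappa bounds}, I would apply the Minkowski-type identity $\int_{\pd M_t}\mu\,d\sigma = -\int_{M_t}H\nu\,dA$ (a consequence of $\Delta_M X = -H\nu$ and the divergence theorem), using the free boundary condition $\mu = X$ on $\pd M_t$ to evaluate the left side. Projecting onto $e_{n+1}$ (so $\nu\cdot e_{n+1} = -\cos\theta$) and switching to the graph variable via $\cos\theta\,ds = dx$, $dA = \omega_{n-1}x^{n-1}\,ds$, yields
\[
\omega_{n-1}\sin\overline\theta\cos^{n-1}\overline\theta = \omega_{n-1}\int_0^{\cos\overline\theta}H(x)x^{n-1}\,dx.
\]
A direct evaluation also gives $\omega_{n-1}\sin\overline\theta\cos^{n-1}\overline\theta = \omega_{n-1}\int_0^{\cos\overline\theta}n\tan\overline\theta\cdot x^{n-1}\,dx$, so $\int_0^{\cos\overline\theta}(H(x)-n\tan\overline\theta)x^{n-1}\,dx = 0$, and monotonicity of $H$ in $x$ then forces $\underline H\le n\tan\overline\theta\le\overline H$.

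To get \eqref{sin theta upper bound} I would use an ODE for $\overline\theta$. Since $\nu(q_t)$ is tangent to $\pd B^{n+1}$ by the free boundary condition, no tangential correction is needed and the boundary point $q_t = (\cos\overline\theta(t)\hat e,\sin\overline\theta(t))$ evolves by pure MCF, $\dot q_t = -\overline H\nu(q_t)$. Projecting onto $e_{n+1}$ gives $\dot{\overline\theta} = \overline H$, and combining with \eqref{eq:kappa bounds} yields $(\log\sin\overline\theta)' = \overline H\cot\overline\theta \ge n$. Integrating from $t$ to $0$ with $\sin\overline\theta(0) = 1$ (since $M_t\to e_{n+1}$) gives $\sin\overline\theta(t)\le e^{nt}$.

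Finally, the upper bound $\underline y\le\sin\overline\theta$ in \eqref{eq:min y lower bound} is immediate since $dy/ds = \sin\theta\ge 0$. The lower bound is the main technical step, and I would prove it by exploiting the structure of the rotational curvature $\hat\kappa = \sin\theta/x$. A direct computation from $H = \kappa + (n-1)\hat\kappa$ and the profile geometry gives the linear ODE $x\hat\kappa' = H-n\hat\kappa$ in the $x$-variable, equivalently $(x^n\hat\kappa)' = x^{n-1}H$, so
\[
\hat\kappa(x) = \frac{1}{x^n}\int_0^x s^{n-1}H(s)\,ds.
\]
Since $H$ is non-decreasing in $x$, this weighted average is at most $H(x)/n$, so $\hat\kappa'(x)\ge 0$ and hence $\hat\kappa(x)\le\hat\kappa(\cos\overline\theta) = \tan\overline\theta$ throughout. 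This translates to $\sin\theta(x) = x\hat\kappa(x)\le x\tan\overline\theta$, and integrating $dy/dx = \tan\theta \le x\tan\overline\theta/\sqrt{1-x^2\tan^2\overline\theta}$ from $0$ to $\cos\overline\theta$ yields $\sin\overline\theta - \underline y \le \cot\overline\theta(1-\cos\overline\theta) = \sin\overline\theta - \sin\overline\theta/(1+\cos\overline\theta)$, which is exactly the claimed lower bound.
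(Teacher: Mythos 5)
Your proposal is correct, but it reaches all three estimates by a genuinely different route than the paper. The paper obtains \eqref{eq:kappa bounds} via two separate sphere-comparison arguments: first it places the orthogonal sphere $\mathrm{S}_{\overline\theta(t)}$ locally below $M_t$ near $q_t$ and slides it to an interior touching point to get $n\tan\overline\theta\le\overline H$; then, \emph{after} establishing the lower bound for $\underline y$, it compares with a different sphere (centred on the axis, through $p_t$ and $q_t$) to get $\underline H\le n\tan\overline\theta$. You instead extract both bounds simultaneously from the Minkowski identity $\int_{\pd M_t}\eta\,d\sigma=-\int_{M_t}H\nu\,dA$ projected onto $e_{n+1}$, using only the monotonicity of $H$ in $x$; this decouples \eqref{eq:kappa bounds} from the $\underline y$ estimate, which is a genuine simplification of the logical structure. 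For the lower bound on $\underline y$, the paper again argues by sphere comparison, showing $\mathrm{S}_{\overline\theta(t)}$ lies nowhere above $M_t$ (using $\cd H\cdot X\ge 0$ at a hypothetical touching point), so that $\underline y$ exceeds the apex height $\sin\overline\theta/(1+\cos\overline\theta)$ of that sphere; you instead integrate the first-order ODE $(x^n\hat\kappa)'=x^{n-1}H$ for the rotational curvature, deduce $\hat\kappa\le\tan\overline\theta$ from monotonicity of $H$, and then integrate $dy/dx=\tan\theta$ explicitly --- a more computational argument that, in effect, re-derives the sphere comparison analytically. The middle estimate \eqref{sin theta upper bound} is handled identically in both ($\dot{\overline\theta}=\overline H\ge n\tan\overline\theta$, then integrate). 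Both approaches rely crucially on the monotonicity $\cd H\cdot X\ge 0$ from Lemma \ref{lem:old-but-not-ancient}; yours trades the paper's geometric barrier pictures for the divergence theorem and an explicit radial ODE, which some readers may find more transparent, while the paper's version is perhaps more robust if one later wants to relax rotational symmetry.
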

\begin{proof}
To prove the lower bound for $\overline H$, it suffices to show that the sphere $\mathrm{S}_{\overline \theta(t)}$ (see \eqref{eq:critical circle}) lies locally below $M_t$ near $q_t$. If this is not the case, then, locally around $q_t$, $M_t$ lies below $\mathrm{S}_{\overline \theta(t)}$ and hence $H(q_t)\le n\tan\overline\theta(t)$. But then we can translate $\mathrm{S}_{\overline \theta(t)}$ downwards until it touches $M_t$ from below in an interior point at which the curvature must satisfy $H\ge n\tan\overline\theta(t)$. This contradicts the maximization of the mean curvature at $q_t$ (unless $M_t$ coincides with $\mathrm{S}_{\overline \theta(t)}$ in a neighbourhood of $q_t$, which by itself implies the claim).

The estimate \eqref{sin theta upper bound} now follows by integrating the inequality
\[
\frac{d}{dt}\sin\overline\theta=\cos\overline\theta\,\overline H\ge n\sin\overline\theta
\]
between any initial time $t$ and the final time $0$ (at which $\overline\theta=\frac{\pi}{2}$ since the solution contracts to the point $e_{n+1}$).

The upper bound for $\underline y$ follows from convexity and the boundary condition $\overline y=\sin\overline\theta$. To prove the lower bound, we will show that the sphere $\mathrm{S}_{\overline \theta(t)}$ lies nowhere above $M_t$. Suppose that this is not the case. Then, since $\mathrm{S}_{\overline \theta(t)}$ lies locally below $M_t$ near $q_t$, we can move $\mathrm{S}_{\overline\theta(t)}$ downwards until it is tangent from below to a point $p'_t$ on $M_t$, at which we must have $H\ge n\tan\overline\theta(t)$. But then, since $\cd H\cdot X\ge 0$, we find that $H\ge n\tan \overline\theta(t)$ for all points on the radial curve between $p'_t$ and the nearest boundary point. But this implies that this whole arc (including $p'_t$) lies above $\mathrm{S}_{\overline\theta(t)}$, a contradiction.

To prove the upper bound for $\underline H$, fix $t$ and consider the sphere $S$ centred on the $e_{n+1}$-axis through the points $p_t$ and $q_t$. Its radius is $r(t)$, where
\[
r\doteqdot \frac{\cos^2\overline\theta+(\sin\overline\theta-\underline y)^2}{2(\sin\overline\theta-\underline y)}\,.
\]
We claim that $M_t$ lies locally below $S$ near $p_t$. Suppose that this is not the case. Then, by the rotational symmetry of $M_t$ and $S$ about the $e_{n+1}$-axis, $M_t$ lies locally above $S$ near $p_t$. This implies two things: first, that
\[
 H(p_t)\ge nr^{-1},
\]
and second, that, by moving $S$ vertically upwards, we can find a point $p'_t$ (the final point of contact) which satisfies
\[
 H(p'_t)\le nr^{-1}\,.
\] 
These two inequalities contradict the (unique) minimization of $H$ at $p_t$. We conclude that
\[
\underline H\le\frac{2n(\sin\overline\theta-\underline y)}{\cos^2\overline\theta+(\sin\overline\theta-\underline y)^2}\le n\tan\overline\theta
\]
due to the lower bound for $\underline y$.
\end{proof}

\subsection{Taking the limit}

\begin{proposition}\label{prop:existence}
There exists a convex, locally uniformly convex ancient mean curvature flow in $B^{n+1}$ with free boundary on $\pd B^{n+1}$.
\end{proposition}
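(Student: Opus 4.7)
The plan is to obtain the ancient solution as a subsequential limit of the old-but-not-ancient solutions $\{M^\rho_t\}_{t\in[\alpha_\rho,0)}$ from Lemma \ref{lem:old-but-not-ancient} as $\rho\to 0^+$. Since $\alpha_\rho\to-\infty$, the limit will be defined on all of $(-\infty,0)$. I take the initial hypersurface $M^\rho\doteqdot\Sigma^{\lambda_\rho}_{t_\rho}$ exactly as in the example preceding Lemma \ref{lem:old-but-not-ancient}, so that $\alpha_\rho=t_\rho$. Proposition \ref{prop:Angenent oval barriers} then implies that $M^\rho_t$ lies above $\Sigma^{\lambda_\rho}_{t}$ for every $t\in[\alpha_\rho,0)$, and since $\lambda_\rho=\lambda(\rho)\to\lambda_0$ as $\rho\to 0$, this yields a non-trivial lower barrier that is uniform in $\rho$ on every compact time interval. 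Together with the spherical upper barriers from Proposition \ref{prop:circle barriers} and the estimate $\sin\overline\theta(t)\le e^{nt}$ from Lemma \ref{lem:crude estimates}, this confines $M^\rho_t$, uniformly in $\rho$, to a compact sub-region of $\overline B{}^{n+1}\setminus\{e_{n+1}\}$ whose boundary contact with $\partial B^{n+1}$ is pinched into an annulus strictly inside the open upper hemisphere.

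Fix $[T_1,T_2]\subset(-\infty,0)$ and let $\rho$ be small enough that $\alpha_\rho<T_1$. Combining the above barriers with convexity (which makes the profile curves uniformly Lipschitz as graphs over the equatorial disc) and with interior curvature estimates of Ecker--Huisken type, one obtains uniform $C^{k,k/2}$ bounds for every $k\ge 1$ on the interior of $M^\rho_t$ for $t\in[T_1,T_2]$. At the free boundary, the corresponding bounds are supplied by Stahl's parabolic regularity theory \cite{Stahl2,Stahl1}: its hypotheses hold in our setting because our barriers keep the contact locus on $\partial B^{n+1}$ and the contact angle under uniform control, and because the orthogonality condition combined with convexity prevents tangential degeneration. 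A diagonal Arzel\`a--Ascoli argument over an exhausting sequence of compact time intervals then extracts a subsequence $\rho_j\to 0$ along which $M^{\rho_j}_t$ converges smoothly on compact subsets of $(-\infty,0)$ to a smooth family $\{M_t\}_{t\in(-\infty,0)}$, which is a free boundary mean curvature flow in $B^{n+1}$ (orthogonality being preserved under smooth convergence).

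The limit inherits rotational symmetry about the $e_{n+1}$-axis, convexity of the enclosed region, and the gradient condition $\cd H\cdot X\ge 0$ from the approximating flows. Non-triviality (i.e.\ $M_t$ is not the stationary equatorial bisector) follows by passing to the limit in the lower barrier: $M_t$ lies above $\Sigma^{\lambda_0}_t$, and $\Sigma^{\lambda_0}_t$ is strictly above the equator for every $t$. Granted non-triviality, local uniform convexity of $M_t$ at each interior time then follows from the strong maximum principle applied to the evolution equation for the smallest principal curvature, together with the Hopf boundary point lemma at the free boundary.

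The main obstacle is obtaining uniform regularity \emph{up to the free boundary}: interior Ecker--Huisken estimates alone would only furnish a weak limit, possibly degenerating where $M^\rho_t$ meets $\partial B^{n+1}$, and convexity by itself does not rule out the contact locus collapsing to a point. This difficulty is resolved by combining our explicit barriers (which prevent degeneration of the boundary geometry) with Stahl's free boundary parabolic Schauder theory, which under these geometric bounds delivers the higher-order estimates necessary for smooth convergence.
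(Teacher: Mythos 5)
Your proof is correct and follows essentially the same strategy as the paper: extract a smooth subsequential limit of the old-but-not-ancient solutions from Lemma~\ref{lem:old-but-not-ancient} as $\rho\to 0^+$, using the barriers to control the geometry uniformly. Three places where your account diverges from the paper's, none of them gaps. First, for uniform regularity you split the argument into interior Ecker--Huisken estimates plus a separate ``free boundary parabolic Schauder'' step; the paper instead observes that the bound $\sin\overline\theta\le\mathrm{e}^{nt}$ together with convexity and the orthogonality condition gives a graph gradient bound $\vert Dy^\rho\vert\le\tan\omega(t)$ that is independent of $\rho$, and then directly invokes Stahl's \emph{global-in-space}, interior-in-time Ecker--Huisken estimate from \cite{Stahl1}, which already reaches the free boundary. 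Your split works but is more elaborate than necessary, and the phrasing ``its hypotheses hold\ldots because our barriers keep the contact locus\ldots under uniform control'' misidentifies the hypothesis: what Stahl needs is precisely the gradient bound, which you already have. Second, for non-triviality you pass the lower barrier $\Sigma^{\lambda_\rho}_t\to\Sigma^{\lambda_0}_t$ to the limit via Proposition~\ref{prop:Angenent oval barriers}; this is valid (and is exactly what the paper does in \S 3.4 to establish the height asymptotics), but the paper's proof of Proposition~\ref{prop:existence} instead uses the contrapositive of Proposition~\ref{prop:circle barriers} to show that every $M^\rho_t$ must meet the region enclosed by $\mathrm{S}_{\theta^+(t)}$, which in the limit gives both non-triviality and the forward convergence $M_t\to e_{n+1}$ in one stroke, without committing to the particular initial data $M^\rho=\Sigma^{\lambda_\rho}_{t_\rho}$. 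Third, you sketch a strong maximum principle argument for local uniform convexity where the paper simply cites \cite[Proposition~4.5]{Stahl2}; the mechanism is the same.
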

\begin{proof}
For each $\rho>0$, consider the old-but-not-ancient solution $\{M^\rho_t\}_{t\in[\alpha_\rho,0)}$, $M^\rho_t=\pd \Omega^\rho_t$, constructed in Lemma \ref{lem:old-but-not-ancient}. By \eqref{sin theta upper bound}, $\Omega^\rho_t$ contains $\mathrm{S}_{\omega(t)}\cap B^{n+1}$, where $\omega(t)\in(0,\frac{\pi}{2})$ is uniquely defined by
\[
\frac{1-\cos\omega(t)}{\sin\omega(t)}=\mathrm{e}^{nt}\,.
\]
If we represent $M^\rho_t$ as a graph $x\mapsto y^\rho(x,t)$ over the horizontal $n$-ball, then convexity and the boundary condition imply that $\vert Dy^\rho\vert\le \tan\omega$. Since $\omega(t)$ is independent of $\rho$, Stahl's (global in space, interior in time) Ecker--Huisken type estimates \cite{Stahl1} imply uniform-in-$\rho$ bounds for the curvature and its derivatives. So the limit
\[
\{M^\rho_t\}_{t\in[\alpha_\rho,0)}\to \{M_t\}_{t\in(-\infty,0)}
\]
exists in $C^\infty$ (globally in space on compact subsets of time) and the limit $\{M_t\}_{t\in(-\infty,0)}$ satisfies mean curvature flow with free boundary in $B^{n+1}$. On the other hand, since  $\{M^\rho_t\}_{t\in(\alpha_\rho,0)}$ contracts to $e_{n+1}$ as $t\to 0$, (the contrapositive of) Proposition \ref{prop:circle barriers} implies that $M^\rho_t$ must intersect the closed region enclosed by $\mathrm{S}_{\theta^+(t)}$ for all $t<0$. It follows that $M_t$ converges to $e_{n+1}$ as $t\to 0$. By \cite[Proposition 4.5]{Stahl2}, $M_t$ is locally uniformly convex for each $t$. Since each $M_t$ is the limit of convex hypersurfaces, each is convex.
\end{proof}

\subsection{Asymptotics for the height}

For the purposes of this section, we fix an ancient solution $\{M_t\}_{(-\infty,0)}$ obtained as in Proposition \ref{prop:existence} by taking a sublimit as $\lambda\searrow \lambda_0$ of the specific old-but-not ancient solutions $\{M^\lambda_t\}_{t\in[\alpha_\lambda,0)}$ corresponding to $M^\lambda_{\alpha_\lambda}=\Sigma^{\lambda}_{t_\lambda}\cap B^{n+1}$, $t_\lambda$ being the time at which $\{\Sigma^\lambda_t\}_{t\in(-\infty,0)}$ meets $\pd B^{n+1}$ orthogonally. The asymptotics we obtain for this solution will be used to prove its uniqueness. 

We will need to prove that the limit $\lim_{t\to-\infty}\mathrm{e}^{-\lambda_0^2t}\underline y(t)$ exists in $(0,\infty)$. The following speed bound will imply that it exists in $[0,\infty)$. 

\begin{lemma} \label{lem:lower curvature estimate}
The ancient solution $\{M_t\}_{(-\infty,0)}$ satisfies
\begin{equation}\label{eq:curvature lower}
\underline H\ge \lambda_0^2\underline y\,.
\end{equation}
\end{lemma}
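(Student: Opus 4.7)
The plan is to apply a parabolic maximum principle to
$$Q := H - \lambda_0^2 y$$
on $M_t$. The first observation is that $Q$ is a supersolution of a critical-Robin heat equation: from $(\partial_t - \Delta)H = |A|^2 H$ and $(\partial_t - \Delta)y = 0$ (the latter since $\partial_t X = \Delta X = -H\nu$), we have $(\partial_t - \Delta) Q = |A|^2 H \ge 0$ in the interior. At the free boundary, since $X$ is tangent to $M_t$, $\nabla y \cdot X = \inner{e_{n+1}}{X} = y$; combined with Stahl's identity $\nabla H \cdot X = H$ (\cite[Prop.~2.1]{Stahl2}), this gives the critical-Robin condition $\nabla Q \cdot X = Q$ on $\partial M_t$.

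Next I would use the construction of $M_t$ as a smooth limit of the old-but-not-ancient solutions $M^\rho_t$, whose initial data $\Sigma^{\lambda_\rho}_{t_\rho}$ flatten to $B^n \times \{0\}$ as $\rho \to 0$, to conclude that $M_t \to B^n \times \{0\}$ smoothly as $t \to -\infty$. In particular both $H$ and $y$ tend to zero uniformly, so $Q \to 0$ uniformly in this limit.

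Then I would show $Q \ge 0$ on $M_t$ by a perturbation / first-infimum argument. For $\epsilon, \mu > 0$ consider $Q_\epsilon := Q + \epsilon e^{\mu t} y$. Since $(\partial_t - \Delta)(e^{\mu t} y) = \mu e^{\mu t} y$ and $\nabla(e^{\mu t}y) \cdot X = e^{\mu t}y$, the function $Q_\epsilon$ is a strict supersolution in the interior (where $y > 0$) and still satisfies the critical-Robin boundary condition, while $Q_\epsilon \to 0$ uniformly as $t \to -\infty$. If $Q_\epsilon$ were negative somewhere, then $m_\epsilon(t) := \inf_{M_t} Q_\epsilon$ would first vanish at some time $t_0$ at some point $p_0 \in M_{t_0}$; an interior $p_0$ is ruled out by the strict supersolution property (min conditions force $(\partial_t - \Delta)Q_\epsilon \le 0$), and a boundary $p_0$ is ruled out by combining the parabolic Hopf boundary-point lemma (which gives $\nabla Q_\epsilon \cdot X < 0$ strictly) with the Robin condition (which gives $\nabla Q_\epsilon \cdot X = Q_\epsilon(p_0,t_0) = 0$). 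Letting $\epsilon \to 0$ yields $Q \ge 0$, so at the tip $Q(p_t, t) = \underline H - \lambda_0^2 \underline y \ge 0$.

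The main obstacle is that the critical-Robin condition is borderline for the maximum principle: its principal eigenvalue $\mu_0 = -\lambda_0^2$ is negative, so e.g.\ the stationary negative function $-\phi_0$ is a supersolution satisfying the same Robin condition, and mere supersolution status does not force non-negativity. The careful combination of the perturbation $\epsilon e^{\mu t}y$ (which preserves the Robin condition while making the supersolution strict in the interior) with the asymptotic decay of $Q$ in backward time is precisely what is needed to circumvent this and close the Hopf contradiction at the boundary.
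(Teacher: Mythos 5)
Your identification of $Q = H - \lambda_0^2 y$ as a supersolution of the critical-Robin heat equation, and your observation that the critical-Robin condition is borderline (the stationary supersolution $-\phi_0$ being a counterexample to naive positivity), are both correct and show real understanding of the obstruction. However, the Phragm\'en--Lindel\"of closure you propose has a genuine gap. The perturbation $\epsilon e^{\mu t}y$ with $\mu>0$ tends to \emph{zero} as $t\to-\infty$ (both $e^{\mu t}\to 0$ and $y\to 0$), so it cannot dominate $Q$ in backward time; in particular, knowing only that $Q_\epsilon\to 0$ uniformly as $t\to-\infty$ does not imply $Q_\epsilon\ge 0$ for $t$ sufficiently negative --- the function could approach zero from below --- and so the ``first time $\inf_{M_t}Q_\epsilon$ reaches zero'' argument has no valid starting point. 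Indeed, the slowest admissible backward decay rate for a nonnegative Robin-heat solution is the ground-state rate $e^{\lambda_0^2 t}$, and there is no a priori information that $Q$ decays strictly faster than this; such a rate is essentially what the lemma is trying to produce.

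The paper sidesteps both difficulties. First, it uses the \emph{ratio} $H/y$ rather than the difference: since $H$ and $y$ both satisfy the Robin condition $\nabla f\cdot X = f$ at the free boundary, the ratio satisfies the \emph{Neumann} condition $\nabla(H/y)\cdot X = 0$, which eliminates the critical-eigenvalue obstruction entirely --- at a putative boundary minimum the Hopf lemma gives an immediate contradiction, with no delicate cancellation required. Second, the estimate is proved on the old-but-not-ancient approximating solutions $\{M^\lambda_t\}_{t\ge\alpha_\lambda}$, which have a genuine initial time, where the initial data $\Sigma^\lambda_{t_\lambda}$ give the explicit bound $\min_{t=\alpha_\lambda}H/y\ge\lambda^2\cos\theta_\lambda\to\lambda_0^2$ as $\lambda\to\lambda_0$; the interior inequality $(\partial_t-\Delta)(H/y)\ge 2\big\langle\nabla(H/y),\nabla y/y\big\rangle$ then preserves the minimum, and the estimate passes to the limit. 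Note also that even granting an initial time, your difference $Q$ is not sign-definite on the initial data $\Sigma^\lambda_{t_\lambda}$ (where $\min H/y$ is slightly below $\lambda_0^2$), so the difference would need a further idea even within the paper's approximation scheme.
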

\begin{proof}
It suffices to prove that
\begin{equation}\label{eq:curvature lower old but not ancient}
\min_{M^\lambda_t}\frac{H}{y}\ge\min_{t=\alpha_\lambda}\frac{H}{y}
\end{equation}
on each of the old-but-not-ancient solutions $\{M^\lambda_t\}_{t\in[\alpha_\lambda,0)}$ since
\ba
\min_{t=\alpha_\lambda}\frac{H}{y}={}&\min_{\Sigma^{\lambda}_{t_\lambda}}\frac{H}{y}\nonumber\\
={}&\lambda^2\min_{\Sigma^{\lambda}_{t_\lambda}}\frac{\tan(\lambda y)}{\lambda y}\cos\theta\left(1+(n-1)\sin^2\theta\frac{\phi'(\lambda x)}{\lambda x\phi(\lambda x)}\right)\nonumber\\
\ge{}&\lambda^2\cos\theta_\lambda\nonumber\\
\to{}&\lambda_0^2\;\;\text{as}\;\;\lambda\to\lambda_0\,,\label{eq:curvature bound initial data}
\ea
where $\theta_\lambda$ is the angle that $\Lambda^\lambda_t$ meets the boundary. But this is an easy consequence of the maximum principle, since
\bann
(\pd_t-\Delta)\frac{H}{y}={}&\vert A\vert^2\frac{H}{y}+2\inner{\cd\frac{H}{y}}{\frac{\cd y}{y}}\\
\ge{}& 2\inner{\cd\frac{H}{y}}{\frac{\cd y}{y}}
\eann
in the interior and
\[
\cd\frac{H}{y}=0
\]
at the boundary.
\end{proof}

It follows that
\begin{equation}\label{eq:rescaled height monotonicity}
\big(\mathrm{e}^{-\lambda_0^2t}\underline y\big)_t\ge 0\,.
\end{equation}
In particular, the limit
\[
A\doteqdot\lim_{t\to-\infty}\mathrm{e}^{-\lambda_0^2t}\underline y(t)
\]
exists in $[0,\infty)$ as claimed.

We want next to prove that the above limit is positive. We will achieve this through a suitable upper bound for the speed.

Recall that
\begin{equation}\label{eq:evolution equations}
(\pd_t-\Delta)\vert\cd\!H\vert\le c\vert A\vert^2\vert\cd\!H\vert\;\;\text{and}\;\;(\partial_t-\Delta)\langle X,\nu\rangle=\vert A\vert^2\!\langle X, \nu\rangle-2H,
\end{equation}
where $c$ depends only on $n$ and $X$ denotes the position. The good $-2H$ term in the second equation may be exploited to obtain the following crude speed bound.

\begin{lemma}\label{lem:gradient estimate}
There exist $T>-\infty$ and $C<\infty$ such that
\begin{equation}\label{eq:curvature estimate}
\overline H\le Ce^{nt}\;\;\text{for all}\;\; t<T\,.
\end{equation}
\end{lemma}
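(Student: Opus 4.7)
The key idea is to compare $H$ to the height $y := X \cdot e_{n+1}$, because both satisfy the \emph{critical-Robin} boundary condition on $\pd M_t^\rho$: for $H$ this is Stahl's boundary condition on a totally umbilic barrier, and for $y$ it follows from $\cd y \cdot \eta = (e_{n+1} - (e_{n+1}\cdot\nu)\nu) \cdot X = e_{n+1}\cdot X = y$ using $\eta = X$ and $\nu \cdot X = 0$ at $\pd M_t^\rho$. Since $(\pd_t - \Delta) H = |A|^2 H$ and $(\pd_t - \Delta) y = 0$, and $y > 0$ on $M_t^\rho$ by Lemma \ref{lem:crude estimates}, the ratio $Q := H/y$ is well-defined and satisfies
\[
(\pd_t - \Delta) Q = |A|^2 Q + \tfrac{2}{y}\cd Q \cdot \cd y\,,\qquad \cd Q \cdot \eta = 0 \;\text{on}\; \pd M_t^\rho.
\]
I plan to control $Q$ on the approximating old-but-not-ancient solutions $\{M_t^\rho\}$ via the maximum principle and then pass to the limit $\rho \to 0$.

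At any space-time max of $Q$, the Neumann condition rules out (via Hopf) a strict boundary max, so $\cd Q = 0$ and $\Delta Q \le 0$ at the max. Hence $\frac{d}{dt} Q_{\max} \le |A|^2 Q_{\max}$. Convexity ($|A|^2 \le H^2$) combined with $H = Qy \le Q_{\max}\overline y \le Q_{\max}\, e^{nt}$ (using Lemma \ref{lem:crude estimates}) yields the differential inequality $\frac{d}{dt} Q_{\max} \le Q_{\max}^3\, e^{2nt}$. Setting $W := Q_{\max}^{-2}$, this becomes $\dot W \ge -2e^{2nt}$, which integrates to show
\[
Q_{\max}(t)^2 \le \frac{1}{Q_{\max}(\alpha_\rho)^{-2} - n^{-1}(e^{2nt}-e^{2n\alpha_\rho})}
\]
so $Q_{\max}(t) \le \sqrt{2}\, Q_{\max}(\alpha_\rho)$ for $t \in [\alpha_\rho, T]$ whenever $e^{2nT}/n < \tfrac{1}{2}Q_{\max}(\alpha_\rho)^{-2}$.

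For the initial bound, on $\Sigma^{\lambda_\rho}_{t_\rho}$ the paper's explicit formula for $H$ gives
\[
\frac{H}{y} = \lambda_\rho^2 \cos\theta \cdot \frac{\tan(\lambda_\rho y)}{\lambda_\rho y}\bigl(1 + (n-1)\sin^2\theta\, \Phi(\lambda_\rho x)\bigr)\,,
\]
with $\Phi \le 1/n$ by Lemma \ref{lem:second phi lemma}. Since $\lambda_\rho \to \lambda_0$ and $\sin\rho \to 0$ as $\rho \to 0$, the factor $\tan(\lambda_\rho y)/(\lambda_\rho y)$ is bounded for small $\rho$ (as $\lambda_\rho y \le \lambda_\rho \sin\rho \to 0$), so $H/y \le Q_0$ uniformly in small $\rho$ for some explicit $Q_0$. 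Passing to the limit $\rho \to 0$ (so $\alpha_\rho \to -\infty$) therefore gives $Q_{\max}(t) \le \sqrt{2}\, Q_0$ on the ancient solution for all $t < T$, and finally $\overline H(t) = H(q_t) = Q(q_t)\, y(q_t) \le \sqrt{2}\, Q_0\sin\overline\theta(t) \le \sqrt{2}\, Q_0\, e^{nt}$. The main technical subtlety is that the two critical-Robin conditions on $H$ and $y$ cancel to give $\cd Q \cdot \eta = 0$, which is precisely what allows the maximum principle to apply without boundary corrections.
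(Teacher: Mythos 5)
Your argument is correct and takes a genuinely different route than the paper.

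The paper's proof first establishes a pointwise gradient estimate $\vert\cd H\vert\le 4H$ on sufficiently old time slices of the approximators. It does this by running a maximum-principle argument on the quantity $\vert\cd H\vert-2H+2\inner{X}{\nu}$ (exploiting the favourable $-2H$ term in the evolution of $\inner{X}{\nu}$, the bound $\vert\inner{X}{\nu}\vert\le H$ from \eqref{eq:gamma dot nu le kappa}, and the initial-data estimate $\vert\cd\log H\vert\lesssim 1$ from Lemma~\ref{lem:gradient conds on initial surface}). It then integrates $\vert\cd H\vert\le 4H$ along the profile curve (length $\le 1$) to get $\overline H\le\mathrm{e}^4\underline H$, and finishes with the crude bounds $\underline H\le n\tan\overline\theta$ and $\sin\overline\theta\le\mathrm{e}^{nt}$ of Lemma~\ref{lem:crude estimates}. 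You instead work directly with $Q=H/y$, observing (as the paper does later in a different lemma) that $H$ and $y$ both satisfy the critical-Robin boundary condition, so $Q$ has a homogeneous Neumann condition and evolves by $(\pd_t-\Delta)Q=\vert A\vert^2Q+2\inner{\cd Q}{\cd y/y}$. The key extra step you supply is that convexity ($\vert A\vert^2\le H^2$) combined with $\overline y\le\mathrm{e}^{nt}$ turns this into a self-contained nonlinear ODE, $\dot Q_{\max}\le Q_{\max}^3\mathrm{e}^{2nt}$, which integrates because the coefficient $\mathrm{e}^{2nt}$ has finite mass on $(-\infty,T]$. This closes the estimate without any gradient bound: in effect you have merged Lemma~\ref{lem:gradient estimate} with the unnamed lemma that follows it (which in the paper uses Lemma~\ref{lem:gradient estimate} as input to linearise the same ODE). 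Your version is shorter, dispenses with Lemma~\ref{lem:gradient conds on initial surface} entirely, and also yields the cleaner conclusion $H/y\le C$ directly, though it does not give the independently useful pointwise bound $\vert\cd H\vert\le 4H$. One small point of rigour worth flagging (which the paper also glosses over in the same spirit): the assertion that a space-time maximum of $Q$ with $\cd Q\cdot\eta=0$ cannot occur at the boundary, so that $\Delta Q\le 0$ there, requires a reflection or penalisation argument (e.g.\ subtracting $\varepsilon f$ with $\cd f\cdot\eta>0$ on $\pd M_t$); you should note this, but it is entirely standard.
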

\begin{proof}
We will prove the estimate on the (sufficiently) old-but-not-ancient solutions $\{M^\lambda_t\}_{t\in(\alpha_\lambda,0)}$. We first prove a crude gradient estimate of the form
\begin{equation}\label{eq:crude kappa gradient estimate}
\vert\cd H\vert\le 4H
\end{equation}
for $t$ sufficiently negative. It will suffice to prove that
\begin{equation}\label{eq:1k}
\vert\cd H\vert-2H+2\inner{X}{\nu}\le 0\,,
\end{equation}
where $X$ denotes the position. Indeed, since
\[
\inner{\cd\inner{X}{\nu}}{X}=A(X^\top,X^\top)>0\,,
\]
we may estimate, by \eqref{eq:curvature lower old but not ancient} and \eqref{eq:curvature bound initial data},
\begin{equation}\label{eq:gamma dot nu le kappa}
\vert\!\inner{X}{\nu}\!\vert\le\vert\!\inner{X}{\nu}\!\vert_{x=0}=y|_{x=0}\le H|_{x=0}=\min_{M^\lambda_t}H\le H
\end{equation}
for $\lambda$ sufficiently close to $\lambda_0$.

For $\lambda$ sufficiently close to $\lambda_0$, we have $H|_{t=\alpha_\lambda}<1/\sqrt c$, where $c$ is the constant in \eqref{eq:evolution equations}. Denote by $T^\lambda$ the first time at which $H$ reaches $1/\sqrt c$. Since $H$ is continuous up to the initial time $\alpha_\lambda$, we have $T^\lambda>\alpha_\lambda$. We claim that \eqref{eq:1k} holds for $t<T^\lambda$ so long as $\lambda$ is sufficiently large. It is satisfied on the initial timeslice $M^\lambda_{\alpha_\lambda}=\Sigma^\lambda_{t_\lambda}$ by Lemma \ref{lem:gradient conds on initial surface}. We will show that the function
\[
f_\varepsilon\doteqdot\vert\cd H\vert-2H+2\inner{X}{\nu}-\varepsilon\mathrm{e}^{t-\alpha_\lambda}
\]
remains negative up to time $T^\lambda$. Suppose, to the contrary, that $f_\varepsilon$ reaches zero at some time $t<T^\lambda$ at some point $p\in\overline M{}^\lambda_t$. Since $\vert\cd H\vert-2H+2\inner{X}{\nu}$ is negative at the boundary, $p$ must be an interior point. 
At such a point, using the evolution equations \eqref{eq:evolution equations}, we have
\bann
0\le (\pd_t-\Delta)f_\varepsilon\le{}&\vert A\vert^2(c\vert\cd H\vert-2H+2\inner{X}{\nu})-4H-\varepsilon\mathrm{e}^{t-\alpha_\lambda}\\
={}&\vert A\vert^2\big(2(c-1)[H-\inner{X}{\nu}]+c\varepsilon\mathrm{e}^{t-\alpha_\lambda}\big)-4H-\varepsilon\mathrm{e}^{t-\alpha_\lambda}\,.
\eann
Recalling \eqref{eq:gamma dot nu le kappa} and estimating $\vert A\vert\le H$ and  $H\le \frac{1}{\sqrt{c}}$ yields
\[
0\le 4(c-1)H^3-4H+(cH^2-1)\varepsilon\mathrm{e}^{t-\alpha_\lambda}<0\,,
\]
which is absurd. So $f_\varepsilon$ does indeed remain negative, and taking $\varepsilon\to 0$ yields \eqref{eq:crude kappa gradient estimate} for $t<T^\lambda$.

Since the length of the profile curve $\Gamma^\lambda_t$ is bounded by $1$, integrating \eqref{eq:crude kappa gradient estimate} from the axis to the boundary yields
\[
\overline H\le \mathrm{e}^4\underline H\;\;\text{for}\;\; t<T^\lambda\,.
\]
Recalling \eqref{eq:kappa bounds} and \eqref{sin theta upper bound}, this implies that
\[
\overline H\le \mathrm{e}^4\frac{\mathrm{e}^{nt}}{\sqrt{1-\mathrm{e}^{2nt}}}\;\;\text{for}\;\; t<T^\lambda\,.
\]
Taking $t=T^\lambda$ we find that $T^\lambda\ge T$, where $T$ is independent of $\lambda$, so we conclude that
\[
\overline H\le C\mathrm{e}^{nt}\;\;\text{for}\;\; t<T\,,
\]
where $C$ and $T$ do not depend on $\lambda$.
\end{proof}

We now exploit \eqref{eq:curvature estimate} to obtain the desired speed bound.
\begin{lemma}
There exist $C<\infty$ and $T>-\infty$ such that
\[
\frac{H}{y}\le \lambda_0^2+C\mathrm{e}^{2nt}\;\;\text{for}\;\; t<T\,.
\]
\end{lemma}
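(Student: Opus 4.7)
The plan is to apply the maximum principle to $u\doteqdot H/y$ on the approximating old-but-not-ancient flows $\{M^\lambda_t\}_{t\in[\alpha_\lambda,0)}$ from which $\{M_t\}$ was constructed, then pass to the limit $\lambda\to\lambda_0$. Recall from the proof of Lemma \ref{lem:lower curvature estimate} that
\[
(\pd_t-\Delta)u=\vert A\vert^2 u+2\langle\cd u,\cd y/y\rangle
\]
in the interior, and that $\cd u$ is tangent to the free boundary since both $H$ and $y$ satisfy $\cd(\cdot)\cdot X=(\cdot)$ there (the second identity follows by direct computation from the orthogonality $\nu\cdot X=0$).

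By Lemma \ref{lem:gradient estimate}, $\vert A\vert^2\le H^2\le C_0^2\mathrm{e}^{2nt}$ for $t\le T_0$, with $C_0$ and $T_0$ independent of $\lambda$. The scalar maximum principle with Neumann boundary data (applied in the sense of Hamilton) gives that $\tilde u(t)\doteqdot \max_{M^\lambda_t}u$ is locally Lipschitz and satisfies $\frac{d}{dt}\tilde u\le C_0^2\mathrm{e}^{2nt}\,\tilde u$, whence Gronwall's inequality yields
\[
\tilde u(t)\le \tilde u(\alpha_\lambda)\exp\!\left[\tfrac{C_0^2}{2n}\mathrm{e}^{2nt}\right]\le \tilde u(\alpha_\lambda)\bigl(1+C_1\mathrm{e}^{2nt}\bigr)
\]
for $t\in[\alpha_\lambda,T]$, where $T\le T_0$ is chosen so that the exponential is bounded by $2$.

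It remains to show that $\tilde u(\alpha_\lambda)\to\lambda_0^2$ as $\lambda\to\lambda_0$. On $M^\lambda_{\alpha_\lambda}=\Sigma^\lambda_{t_\lambda}$ we have the explicit formula
\[
u=\lambda^2\tfrac{\tan(\lambda y)}{\lambda y}\cos\theta\left(1+(n-1)\sin^2\theta\,\tfrac{\phi'(\lambda x)}{\lambda x\phi(\lambda x)}\right).
\]
The boundary angle $\theta_\lambda$ satisfies $\theta_\lambda\to 0$ as $\lambda\to\lambda_0$ (by the defining equation \eqref{eq:defining equation lambda_0} for $\lambda_0$), so $\sin\theta\le \sin\theta_\lambda=O(\mathrm{e}^{\lambda^2 t_\lambda})$ uniformly on the profile. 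Similarly, $\sin(\lambda y)\le\mathrm{e}^{\lambda^2 t_\lambda}\phi(\lambda)$ forces $\lambda y=O(\mathrm{e}^{\lambda^2 t_\lambda})$, and hence $\tan(\lambda y)/(\lambda y)=1+O(\mathrm{e}^{2\lambda^2 t_\lambda})$. Combined with the bound $\phi'(\lambda x)/(\lambda x\phi(\lambda x))\le 1/n$ (Lemma \ref{lem:second phi lemma}), we obtain
\[
\tilde u(\alpha_\lambda)\le \lambda^2\bigl(1+K\mathrm{e}^{2\lambda^2 t_\lambda}\bigr)\le \lambda_0^2+K'\mathrm{e}^{2\lambda^2 t_\lambda}
\]
for constants $K,K'$ independent of $\lambda$.

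Combining the two estimates and taking $\lambda\to\lambda_0$ (so that $t_\lambda\to-\infty$ while $M^\lambda_t\to M_t$ smoothly on compact time intervals), the term $K'\mathrm{e}^{2\lambda^2 t_\lambda}$ vanishes and we conclude
\[
\max_{M_t}u\le \lambda_0^2\bigl(1+C_1\mathrm{e}^{2nt}\bigr)\le \lambda_0^2+C\mathrm{e}^{2nt}
\]
for $t<T$. The main technical point is the initial-data estimate: one must combine the asymptotic flatness of $\Sigma^\lambda_{t_\lambda}$ (from $\theta_\lambda\to 0$) and its smallness (from $\lambda y\to 0$) to push $\tilde u(\alpha_\lambda)$ down to $\lambda_0^2$ in the limit, so that only the Gronwall correction contributes to the final bound.
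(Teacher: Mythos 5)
Your proof is correct and follows essentially the same route as the paper: the evolution equation for $H/y$ with Neumann boundary condition, the curvature decay $|A|^2\le C\mathrm{e}^{2nt}$ from Lemma \ref{lem:gradient estimate}, a maximum-principle/Gronwall step to propagate the bound forward with a $1+C\mathrm{e}^{2nt}$ correction, and the explicit initial-data formula on $\Sigma^\lambda_{t_\lambda}$ pushed to $\lambda_0^2$ in the limit $\lambda\to\lambda_0$. The only cosmetic difference is that you run Gronwall once and Taylor-expand the exponential, whereas the paper first gets a crude constant bound and then reintegrates to extract the $1+C\mathrm{e}^{2nt}$ factor; both amount to the same estimate.
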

\begin{proof}
Consider the old-but-not-ancient solution $\{M^\lambda_t\}_{t\in(-\infty,0)}$. By \eqref{eq:curvature estimate}, we can find $C<\infty$ and $T>-\infty$ such that
\bann
(\pd_t-\Delta)\frac{H}{y}={}&\vert A\vert^2\frac{H}{y}+2\inner{\cd\frac{H}{y}}{\frac{\cd y}{y}}\\
\le{}&C\mathrm{e}^{2nt}\frac{H}{y}+2\inner{\cd\frac{H}{y}}{\frac{\cd y}{y}}\;\;\text{for}\;\; t<T\,.
\eann
Since, at a boundary point,
\[
\cd\frac{H}{y}=\frac{\cd H}{y}-\frac{H}{y}\frac{\cd y}{y}=0\,,
\]
the Hopf boundary point lemma 
and the \textsc{ode} comparison principle yield
\[
\max_{M^\lambda_t}\frac{H}{y}\le C\max_{M^\lambda_{\alpha_\lambda}}\frac{H}{y}\;\;\text{for}\;\; t\in(\alpha_\lambda,T)\,.
\]
But now
\bann
(\pd_t-\Delta)\frac{H}{y}\le{}&C\mathrm{e}^{2nt}\max_{M^\lambda_{\alpha_\lambda}}\frac{H}{y}+2\inner{\cd\frac{H}{y}}{\frac{\cd y}{y}}\;\;\text{for}\;\; t<T\,,
\eann
and hence, by \textsc{ode} comparison,
\bann
\max_{M^\lambda_t}\frac{H}{y}\le \max_{M^\lambda_{\alpha_\lambda}}\frac{H}{y}\left(1+C\mathrm{e}^{2nt}\right)\;\;\text{for}\;\; t\in(\alpha_\lambda,T)\,.
\eann
Since, on the initial timeslice $M^\lambda_{\alpha_\lambda}=\Sigma_{t_\lambda}^\lambda$,
\[
\frac{H}{y}=\frac{\lambda\tan(\lambda y)}{y}\cos\theta\left(1+(n-1)\sin^2\theta\frac{\phi'(\lambda x)}{\lambda\phi(\lambda x)}\right)\,,
\]
the claim follows upon taking $\lambda\to\lambda_0$.
\end{proof}

It follows that
\[
\left(\log\underline y(t)-\lambda_0^2t\right)_t\le C\mathrm{e}^{2nt}\;\;\text{for}\;\; t<T
\]
and hence, integrating from time $t$ up to time $T$,
\[
\log\underline y(t)-\lambda_0^2t\ge\log\underline y(T)-\lambda_0^2T-C\;\;\text{for}\;\; t<T\,.
\]
So we indeed find that
\begin{lemma} \label{lem:backwards asymptotics}
the limit
\begin{equation}\label{eq:backwards asymptotics}
A\doteqdot\lim_{t\to-\infty}\mathrm{e}^{-\lambda_0^2t}\underline y(t)
\end{equation}
exists in $(0,\infty)$ on the particular ancient solution $\{M_t\}_{(-\infty,0)}$.
\end{lemma}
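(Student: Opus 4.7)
The plan is a brief consolidation of the two one-sided bounds on $\underline y$ developed just above the statement. I would split it into an existence step and a positivity step.

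\emph{Existence in $[0,\infty)$.} The lower speed bound of Lemma \ref{lem:lower curvature estimate}, combined with the identity $\underline y'(t)=\underline H(t)$ at the apex (where the outward unit normal is $-e_{n+1}$ and so the MCF velocity of $p_t$ is $\underline H(t)e_{n+1}$), is equivalent to the monotonicity relation \eqref{eq:rescaled height monotonicity}. Hence $t\mapsto \mathrm{e}^{-\lambda_0^2 t}\underline y(t)$ is non-decreasing, and since it is bounded above by its value at any fixed $t_0\in(-\infty,0)$, the limit $A=\lim_{t\to-\infty}\mathrm{e}^{-\lambda_0^2 t}\underline y(t)$ exists and lies in $[0,\infty)$.

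\emph{Ruling out $A=0$.} Here the upper speed bound $H/y\le\lambda_0^2+C\mathrm{e}^{2nt}$ (for $t<T$) from the preceding lemma is the crucial input. Evaluated at $p_t=\underline y(t)e_{n+1}$ (where the minima of both $H$ and $y$ coincide, by rotational symmetry and $\cd H\cdot X\ge 0$), and combined once more with $\underline y'(t)=\underline H(t)$, it yields the scalar differential inequality
\[
(\log\underline y-\lambda_0^2 t)_t\le C\mathrm{e}^{2nt}\quad\text{for } t<T.
\]
Integrating from $t$ up to $T$ produces a uniform lower bound $\log\underline y(t)-\lambda_0^2 t\ge \log\underline y(T)-\lambda_0^2 T-C'$ (exactly the display immediately preceding the lemma statement), which passes to the limit as $A\ge\underline y(T)\mathrm{e}^{-\lambda_0^2 T-C'}>0$.

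No genuine obstacle appears at this final step; the content of the lemma is wholly inherited from the two preceding speed bounds. The substantive technical work sits in the upper speed bound just above, where one must first produce the crude gradient estimate $|\cd H|\le 4H$ via the auxiliary function $|\cd H|-2H+2\inner{X}{\nu}$ and then upgrade it through an ODE comparison using the Hopf boundary point lemma.
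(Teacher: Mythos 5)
Your argument is correct and reproduces the paper's own proof: the monotonicity $(\mathrm{e}^{-\lambda_0^2 t}\underline y)_t\ge 0$ (from Lemma \ref{lem:lower curvature estimate} together with $\underline y'=\underline H$ at the apex) gives existence of the limit in $[0,\infty)$, and the integrated form of the upper speed bound $H/y\le\lambda_0^2+C\mathrm{e}^{2nt}$ gives the strictly positive lower bound. This is exactly the chain of displays the paper records immediately before stating the lemma.
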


\section{Uniqueness}

Now let $\{M_t\}_{t\in(-\infty,0)}$, $M_t=\pd_{\mathrm{rel}}\Omega_t$, be \emph{any} convex, locally uniformly convex ancient free boundary mean curvature flow in the ball. By Stahl's theorem \cite{Stahl2}, we may assume that $M_t$ contracts to a point on the boundary as $t\to 0$.

\subsection{Backwards convergence}

We first show that $\overline M_t$ converges to a bisector as $t\to-\infty$.
\begin{lemma}\label{lem:backwards convergence}
Up to an ambient rotation,
\[
\overline M_t\underset{C^\infty}{\longrightarrow} \overline B{}^n\times\{0\}\,\,\text{as}\;\;t\to-\infty\,.
\]
\end{lemma}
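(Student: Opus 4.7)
The plan is to exhibit a limit bisector as $\partial\Omega_{-\infty}$, where $\Omega_{-\infty}$ is the increasing union of the convex regions bounded by $M_t$, and to upgrade this to smooth convergence through compactness of the time-shifted flows.

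By Stahl's theorem \cite{Stahl2}, after an ambient rotation I may assume $M_t\to e_{n+1}$ as $t\to 0$. Since $H>0$ on $M_t$, the convex bodies $\Omega_t\subset B^{n+1}$ (enclosed by $M_t$ and the adjacent spherical cap of $\partial B^{n+1}$) are strictly decreasing in $t$. Their union $\Omega_{-\infty}:=\bigcup_{t<0}\Omega_t$ is therefore a convex open subset of $B^{n+1}$; the sphere barriers of Proposition \ref{prop:circle barriers}, applied forwards from a time $t_0$ close to $0$ at which $M_{t_0}$ lies in a small spherical cap above some $\mathrm{S}_{\theta_0}$, show that $M_t$ stays above $\mathrm{S}_{\theta^-(t^-_0+t-t_0)}$ for $t\in(t_0,0)$, and monotonicity then forces $\Omega_{-\infty}\subsetneq B^{n+1}$, so $\partial\Omega_{-\infty}\cap B^{n+1}$ is nonempty.

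To identify $\partial\Omega_{-\infty}\cap B^{n+1}$ as a bisector I take a sequence $t_k\to-\infty$, form the time-shifted flows $M^k_s:=M_{s+t_k}$, and extract a subsequential $C^\infty_{\loc}$ limit. Convexity together with Proposition \ref{prop:circle barriers} bounds the boundary angle $\overline\theta$ of $M^k_s$ uniformly below $\pi/2$, giving a uniform gradient bound on the graph representation of $M^k_s$ over a fixed $n$-ball; interior Ecker--Huisken estimates combined with Stahl's boundary estimates \cite{Stahl1} then promote this to uniform $C^\infty$ bounds on compact spacetime regions. The limit $\{N_s\}_{s\in\R}$ is a convex free boundary MCF that is \emph{eternal}, so by Stahl's theorem \cite{Stahl2} (which gives finite-time extinction for locally uniformly convex flows) $N_s$ cannot be locally uniformly convex. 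A strong maximum principle argument on the smallest eigenvalue of the second fundamental form then forces $N_s$ to be totally geodesic, i.e.\ a bisector $\Pi\cap B^{n+1}$ with $\Pi$ a hyperplane through the origin.

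Because the $\Omega_t$ are monotone, every subsequential Hausdorff limit agrees with $\partial\Omega_{-\infty}\cap B^{n+1}$, so this set is a single bisector $\Pi\cap B^{n+1}$. Rotating the ambient space so that $\Pi=\R^n\times\{0\}$, the $C^0$ convergence $M_t\to\overline B{}^n\times\{0\}$ together with the uniform smooth bounds on the time-shifted flows yields the claimed $C^\infty$ convergence. I expect the main obstacle to be the uniform smooth estimates near the free boundary and ruling out a non-flat eternal limit: the former relies on translating the sphere-barrier gradient bound into Ecker--Huisken-type curvature estimates compatible with Stahl's boundary theory, and the latter on the incompatibility of finite-time extinction with an eternal lifespan, carried out via a careful application of the strong maximum principle at the free boundary.
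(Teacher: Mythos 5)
Your overall strategy---extract a subsequential limit of time-shifted flows, identify the limit as a bisector, and use monotonicity and uniform estimates to upgrade to full $C^\infty$ convergence---is close in spirit to the paper's proof, but you identify the bisector by a different route. The paper observes that, by monotonicity of the enclosed regions $\Omega_t$, the time-shifted flows converge to the \emph{stationary} flow $\{\partial\Omega\}_{t\in\R}$, $\Omega\doteqdot\cup_t\Omega_t$; since the limit is stationary, hence minimal, one then argues (via the barriers of Proposition \ref{prop:circle barriers} together with the strong maximum principle and Hopf lemma, or simply via $A\ge 0$, $\tr A=0\Rightarrow A\equiv 0$) that $\partial\Omega$ is a bisector. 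You instead note that the limit is an \emph{eternal} convex flow, rule out local uniform convexity via Stahl's finite-time extinction theorem, and invoke the tensor strong maximum principle on $A$. Both routes work, but yours is more roundabout: once one observes that the subsequential limit is forced by monotonicity to be the fixed hypersurface $\partial\Omega$, it is already stationary and the rest is immediate.

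Two steps in your argument are not correct as written. First, the barrier argument you give for $\Omega_{-\infty}\subsetneq B^{n+1}$ is oriented the wrong way: Proposition \ref{prop:circle barriers} applied forward from a time $t_0$ constrains $M_t$ only for $t>t_0$, and by monotonicity those are the \emph{smaller} sets $\Omega_t\subset\Omega_{t_0}$; this places no restriction on the union $\Omega_{-\infty}=\cup_{t<0}\Omega_t$, which is governed by the behaviour as $t\to-\infty$. The conclusion is true but needs no barrier: each $\Omega_t$ is convex and open, and its relative boundary meets $\partial B^{n+1}$ orthogonally, so the supporting hyperplane at any point of $\partial M_t\cap\partial B^{n+1}$ passes through the origin; hence $0\notin\Omega_t$ for every $t$, so $0\notin\Omega_{-\infty}$, and $\Omega_{-\infty}$ (being open and convex) lies in a half-ball. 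Second, the ``boundary angle $\overline\theta$ of $M^k_s$'' is undefined for a general solution, since rotational symmetry is established only afterward (Lemma \ref{lem:reflection}); the gradient bound feeding into the Ecker--Huisken/Stahl estimates should instead be derived from convexity, the orthogonal boundary condition, and the half-ball containment just described.
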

\begin{proof}
Define $\Omega\doteqdot\cup_{t\in(-\infty,0)}\Omega_t$, where $\Omega_t\subset B^{n+1}$ is the convex region relatively bounded by $M_t$. Given $s\in\R$, define the free boundary mean curvature flow $\{M^s_t\}_{t\in(-\infty,-s)}$ by $M^s_t\doteqdot M_{t+s}=\pd\Omega^s_t$, where $\Omega^s_t\doteqdot \Omega_{t+s}$. Since the flow is monotone, the flows $\{M^s_{t}\}_{t\in(-\infty,-s)}$ converge to the stationary limit $\{\pd\Omega\}_{t\in(-\infty,\infty)}$ as $s\to-\infty$ uniformly in the Hausdorff topology on compact subsets of time. In fact, the convergence is smooth 
due to the Ecker--Huisken type estimates of Stahl \cite{Stahl2}. Now, since $\Omega$ is convex and its boundary intersects $\partial B^n$ orthogonally, it lies in some half-ball. But it cannot lie strictly in this half-ball due to Proposition \ref{prop:circle barriers}. The strong maximum principle and Hopf boundary point lemma then imply that $\Omega$ is a half-ball.
\end{proof}

We henceforth assume, without loss of generality, that the backwards limit is the horizontal $n$-ball.

\subsection{Reflection symmetry}

We can now prove that the solution is rotationally symmetric using Alexandrov reflection across planes through the origin (see Chow and Gulliver \cite{ChGu01}).

\begin{lemma}\label{lem:reflection}
$M_t$\! is rotationally symmetric about the $e_{n+1}\!$-axis for all $t$\!.
\end{lemma}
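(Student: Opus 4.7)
The plan is to apply the Alexandrov method of moving planes across hyperplanes through the origin containing the $e_{n+1}$-axis, following Chow--Gulliver \cite{ChGu01}. Since rotational symmetry about the $e_{n+1}$-axis is generated by reflections across such hyperplanes, it suffices to show that $R_0(M_t)=M_t$ for every $t$ and each such hyperplane $P_0$, where $R_s$ denotes reflection across the parallel hyperplane $P_s\doteqdot\{X\cdot\nu=s\}$ and $\nu$ is a unit normal to $P_0$.

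The key quantity is
\[
s_*(t)\doteqdot\inf\big\{s\in\R : R_s(\overline\Omega_t\cap\{X\cdot\nu\ge s\})\subset\overline\Omega_t\big\}\,,
\]
which is finite by convexity, with the infimum attained by continuity. The central step is to show that $s_*(t)$ is monotone non-increasing in $t$: if the defining inclusion holds at time $t_0$ with $s=s_0$, then $R_{s_0}(M_t\cap\{X\cdot\nu\ge s_0\})$ and $M_t\cap\{X\cdot\nu\le s_0\}$ evolve as mean curvature flows in the slab $\{X\cdot\nu\le s_0\}$ sharing the common Dirichlet interface $P_{s_0}$, so that the strong maximum principle at interior contact points together with the Hopf boundary point lemma at contact points on $\pd B^{n+1}$ preserve the inclusion for all $t>t_0$.

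By Lemma \ref{lem:backwards convergence}, $\overline M_t\to\overline B{}^n\times\{0\}$ smoothly on interior compact subsets and globally in Hausdorff sense as $t\to-\infty$. A direct calculation shows that the flat limiting disc has moving-plane parameter $s_*=0$ (since $|R_s(x)|^2=|x|^2-4s(x\cdot\nu-s)$ on the cap, which is at most $|x|^2$ for $s\ge 0$ and strictly larger than one for some cap point when $s<0$), so $s_*(t)\to 0$ as $t\to-\infty$; combined with monotonicity this yields $s_*(t)\le 0$ for all $t$. At $s=s_*(t)$ the reflected cap must actually touch $M_t$, for otherwise continuity in $s$ would allow the infimum to be decreased; the strong maximum principle (for interior tangency) or the Hopf boundary point lemma (for boundary tangency) then forces $R_{s_*(t)}(\Omega_t)=\Omega_t$. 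Since reflection across $P_s$ preserves $\pd B^{n+1}\supset\pd M_t$ only for $s=0$, this symmetry is compatible with the free boundary condition only when $s_*(t)=0$. Hence $R_0(\Omega_t)=\Omega_t$, and varying $P_0$ yields the required rotational symmetry about $\R e_{n+1}$.

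The main technical obstacle, as is typical for Alexandrov reflection in a domain with boundary, is making the maximum principle argument go through cleanly at the free boundary. The geometric fact that makes this work is that $R_s$ maps $\pd B^{n+1}\cap\{X\cdot\nu\ge s\}$ strictly into $B^{n+1}$ for $s>0$, so first boundary contact between the two flows can only occur at $s=0$, where the orthogonality of $M_t$ to $\pd B^{n+1}$ admits a clean application of the Hopf boundary point lemma; this is precisely the content of the free-boundary Alexandrov reflection principle of \cite{ChGu01}, on which the whole argument rests.
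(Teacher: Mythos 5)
Your proposal is the classical moving-planes method: translate a hyperplane $P_s=\{X\cdot\nu=s\}$ in a fixed direction $\nu$ and track the critical parameter $s_*(t)$. The paper instead \emph{tilts} the reflecting hyperplane: it considers only hyperplanes $\pd H_z=\{X\cdot z=0\}$ \emph{through the origin}, establishes the one-sided Alexandrov property for every $z\in S^n_+$ using the backward convergence and the Chow--Gulliver reflection principle, and then sends $z$ to the equator. The distinction is not cosmetic, and I believe it is where your argument breaks.

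The gap is in the monotonicity of $s_*(t)$. You want to compare the reflected piece $\widetilde P_t\doteqdot R_{s_0}(M_t\cap\{X\cdot\nu\ge s_0\})$ with $M_t\cap\{X\cdot\nu\le s_0\}$, and you assert that first contact is handled either by the strong maximum principle (interior) or by the Hopf boundary point lemma ``at contact points on $\pd B^{n+1}$.'' But for $s_0>0$ the boundary of $\widetilde P_t$ other than $P_{s_0}$ lies on the reflected sphere $R_{s_0}(\pd B^{n+1})$, which as you yourself compute lies \emph{strictly inside} $B^{n+1}$ in the slab $\{X\cdot\nu<s_0\}$. Consequently there is no contact on $\pd B^{n+1}$ at all; the dangerous case, which your dichotomy omits, is a first touch between a \emph{boundary} point of $\widetilde P_t$ (on $R_{s_0}(\pd B^{n+1})$) and an \emph{interior} point of $M_t$. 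At such a point $\widetilde P_t$ carries a Neumann condition relative to the reflected sphere while $M_t$ carries no boundary condition whatsoever, so neither the interior strong maximum principle nor the Hopf lemma applies, and the inclusion $\widetilde P_t\subset\overline\Omega_t$ is not manifestly preserved. The usual free-boundary Alexandrov reflection argument is immune to this precisely because one reflects only across hyperplanes that preserve $B^{n+1}$: then $R_zM_t$ is a free-boundary flow in the \emph{same} ball, any boundary point of either surface lies on $\pd B^{n+1}$, boundary-to-interior contact is geometrically impossible, and boundary-to-boundary contact is handled by Hopf with the shared orthogonality condition. The ``reflection principle of \cite{ChGu01}'' you invoke is the through-the-origin version, not the parallel-translate version, so it does not justify the step you need.

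By contrast, the paper never moves $P_0$. For each fixed $z\in S^n_+$ it uses the smooth backward convergence to the flat disc (Lemma \ref{lem:backwards convergence}) to show that the reflected surface and the surface are eventually disjoint in $H_z$ (via a Rolle/mean-value argument on the Gauss map, which concentrates near $\pm e_{n+1}$), then propagates the one-sided ordering forward in time by the Chow--Gulliver principle for hyperplanes through the origin, and finally lets $z\to\pd S^n_+$ to obtain reflection symmetry across every hyperplane through $\R e_{n+1}$. If you wish to salvage the moving-plane route, you would need an additional argument ruling out boundary-of-reflection/interior contact when $s_0>0$; absent that, I would recommend switching to the tilting-hyperplane scheme.
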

\begin{proof}
Given any $z\in S^n_+$ in the upper half sphere $S^n_+\doteqdot\{z\in S^n:z\cdot e_{n+1}>0\}$, we define the open halfspace
\[
H_z\doteqdot \{X\in \R^{n+1}:X\cdot z>0\}
\]
and denote by $R_z$ the reflection about $\partial H_z$. We first claim that, for every $z\in S^n_+$, there exists $t= t_z$ such that
\begin{equation}\label{eq:reflection}
(R_z\cdot M_t)\cap (M_t\cap H_z)=\emptyset
\end{equation}
for all $t<t_z$. Assume, to the contrary, that there exists $z\in S^n_+$, a sequence of times $t_i\to -\infty$, and a sequence of pairs of points $p_i, q_i\in M_{t_i}$ such that $R_z(p_i)= q_i$. This implies that the line passing through $p_i$ and $q_i$ is parallel to the vector $z$, so the mean value theorem yields for each $i$ a point $r_i$ on $M_{t_i}$ where the normal is orthogonal to $z$. But this contradicts Lemma \ref{lem:backwards convergence}.

The Alexandrov reflection principle \cite{ChGu01} now implies that \eqref{eq:reflection} holds for all $t<0$ (note that $R_z\cdot M_t$ also intersects $\partial B^{n+1}$ orthogonally). In fact, it is clear that $(R_z\cdot M_t)\cap H_z$ lies \emph{above} $M_t\cap H_z$ for all $t<0$ and all $z\in S^n_+$. The claim follows.
\end{proof}

\subsection{Asymptotics for the height}

We begin with a lemma.

\begin{lemma}
For every $t<0$,
\[
\cd H\cdot X\ge 0\;\;\text{in}\;\; M_t
\]
and hence
\begin{equation}\label{eq:min y lower bound general}
\frac{\sin\overline\theta}{1+\cos\overline\theta}\le \underline y\,.
\end{equation}
\end{lemma}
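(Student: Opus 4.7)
The plan is to adapt the maximum principle argument used in Lemma \ref{lem:old-but-not-ancient} to the ancient setting. In that lemma the bound $\cd H\cdot X\ge 0$ was propagated forward in time from initial data on $M^\rho_{\alpha_\rho}=\Sigma^{\lambda_\rho}_{t_\rho}$ (where it was verified directly in Lemma \ref{lem:gradient conds on initial surface}); here there is no initial time, but the smooth backwards convergence $M_t\to \overline B{}^n\times\{0\}$ (Lemma \ref{lem:backwards convergence}), together with the fact that $H\equiv 0$ on the bisector, serves as a substitute by making $|\cd H|$ arbitrarily small, uniformly on $\overline M_t$, at sufficiently early times.

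Fix $\sigma<0$ and set $C_\sigma\doteqdot\sup_{t\le\sigma}\max_{\overline M_t}|A|^2$, which is finite by Stahl's Ecker--Huisken type estimates. Let $c$ be the universal constant appearing in \eqref{eq:evolution equations}. Given $\varepsilon>0$, I would choose $T<\sigma$ so negative that $\max_{\overline M_T}|\cd H|<\varepsilon$ (possible by the smooth backwards convergence) and then, on $M_t\times[T,\sigma]$, introduce the auxiliary function
\[
v_{T,\varepsilon}\doteqdot\begin{cases}\inner{\cd H}{\vec r}+\varepsilon\mathrm{e}^{(C_\sigma c+1)(t-T)}&\text{on }M_t\setminus\R e_{n+1},\\ \varepsilon\mathrm{e}^{(C_\sigma c+1)(t-T)}&\text{on }M_t\cap\R e_{n+1},\end{cases}
\]
with $\vec r\doteqdot X^\top/|X^\top|$. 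Rotational symmetry (Lemma \ref{lem:reflection}) forces $\inner{\cd H}{\vec r}\to 0$ on approach to the axis, so $v_{T,\varepsilon}$ is continuous. It is nonnegative on the timeslice $\{t=T\}$ by the choice of $T$, positive on the rotation axis by inspection, and positive at the free boundary because $\vec r=X$ there and $\inner{\cd H}{X}=H\ge 0$. Supposing for contradiction that $v_{T,\varepsilon}$ first drops to zero at an interior off-axis point $(x_0,t_0)$ and proceeding verbatim as in Lemma \ref{lem:old-but-not-ancient}---using the evolution inequality in \eqref{eq:evolution equations} to derive
\[
0\ge(\pd_t-\Delta)v_{T,\varepsilon}\ge-C_\sigma c|\cd H|+\varepsilon(C_\sigma c+1)\mathrm{e}^{(C_\sigma c+1)(t_0-T)}=\varepsilon\mathrm{e}^{(C_\sigma c+1)(t_0-T)}>0
\]
---yields the needed contradiction. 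Letting $\varepsilon\to 0$, then $T\to-\infty$, then $\sigma\to 0$, gives $\cd H\cdot X\ge 0$ on $M_t$ for every $t<0$.

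The consequence \eqref{eq:min y lower bound general} then follows exactly as in the proof of the lower bound on $\underline y$ in Lemma \ref{lem:crude estimates}: if $\mathrm{S}_{\overline\theta(t)}$ lay above $M_t$ anywhere, then sliding $\mathrm{S}_{\overline\theta(t)}$ downwards from tangency near $q_t$ would produce an interior tangent point $p'_t$ at which $H(p'_t)\ge n\tan\overline\theta(t)$; the monotonicity $\cd H\cdot X\ge 0$ would then force $H\ge n\tan\overline\theta(t)$ along the entire radial arc from $p'_t$ outward, placing that whole arc (including $p'_t$) above $\mathrm{S}_{\overline\theta(t)}$, a contradiction. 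The only genuine obstacle here is the input provided by Lemma \ref{lem:backwards convergence}: without uniform smooth backwards convergence to the bisector one could not arrange $|\cd H|$ to be uniformly small at an early time $T$, which is what plays the role of the initial data control enjoyed by the old-but-not-ancient solutions; with that replacement in hand, the rest is a direct transcription of the earlier interior maximum principle argument.
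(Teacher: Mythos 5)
Your plan reproduces the exact argument of Lemma \ref{lem:old-but-not-ancient}, but with an exponential-in-time auxiliary term $\varepsilon\mathrm{e}^{(C_\sigma c+1)(t-T)}$ and a ``substitute initial condition'' at a very negative time $T$ chosen so that $\max_{\overline M_T}|\cd H|<\varepsilon$. The trouble is that this couples $T$ to $\varepsilon$, and once you acknowledge the coupling the limit ``$\varepsilon\to 0$, then $T\to-\infty$'' no longer closes. Concretely: for fixed $T$, taking $\varepsilon\to 0$ destroys the hypothesis $\max_{\overline M_T}|\cd H|<\varepsilon$ (since $|\cd H|$ does not vanish identically at any finite time). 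If instead you let $T=T(\varepsilon)\to-\infty$ with $\varepsilon$, then the conclusion of the maximum principle is
\[
\inner{\cd H}{\vec r\,}(\cdot,t)\;\ge\;-\varepsilon\,\mathrm{e}^{(C_\sigma c+1)(t-T(\varepsilon))}\,,
\]
and for a fixed $t\le\sigma$ the right-hand side goes to zero only if $\varepsilon$ dominates $\mathrm{e}^{(C_\sigma c+1)T(\varepsilon)}$, i.e.\ only if $\max_{\overline M_{T}}|\cd H|$ decays strictly faster than $\mathrm{e}^{(C_\sigma c+1)T}$ as $T\to-\infty$. The smooth backwards convergence of Lemma \ref{lem:backwards convergence} gives you $|\cd H|\to 0$ but no rate, and at this stage of the paper no such rate has been established for a general ancient solution (the estimate $\overline H\le C\mathrm{e}^{nt}$ of Lemma \ref{lem:gradient estimate} is only proved for the particular constructed solution). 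So the step ``letting $\varepsilon\to 0$, then $T\to-\infty$'' is a genuine gap, not a routine diagonalization.

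The paper sidesteps this by replacing the time-growing term $\varepsilon\mathrm{e}^{(C_\sigma c+1)(t-T)}$ with the geometric quantity $\varepsilon(1-\langle X,\nu\rangle)$. This term does not degrade as $t\to-\infty$: it tends to $\varepsilon$ uniformly, because $\langle X,\nu\rangle\to 0$ by the backwards convergence. That gives the qualitative ``boundary condition at $t=-\infty$'' needed for a first-touching-time argument without any rate information, at the price that the evolution is no longer a clean supersolution inequality for an exponential; instead one exploits the reaction term $-2H$ in $(\pd_t-\Delta)\langle X,\nu\rangle=|A|^2\langle X,\nu\rangle-2H$ (recorded in \eqref{eq:evolution equations}) to absorb the bad $|A|^2$ terms, which works once $T$ is chosen so that $H<2/(2c+1)$ for $t<T$ --- a smallness of $H$, not of $\cd H$, and independent of $\varepsilon$. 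The argument then runs on $(-\infty,T]$ and is extended to $(T,0)$ exactly as in Lemma \ref{lem:old-but-not-ancient}. Your derivation of \eqref{eq:min y lower bound general} from $\cd H\cdot X\ge 0$ is the same as the paper's and is fine.
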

\begin{proof}
Choose $T>-\infty$ so that $H<\frac{2}{2c+1}$ for $t<T$, where $c=c(n)\ge 2$ is the constant in the evolution inequality for $\vert\cd H\vert$, and, given $\varepsilon>0$, define
\[
v_\varepsilon\doteqdot \inner{\cd H}{\vec r\,}+\e(1-\langle X, \nu\rangle)
\]
on $M_t\setminus \R e_{n+1}$, where $\vec r\doteqdot X^\top/\vert X^\top\vert$. Note that $v_\varepsilon\to \e(1-\langle X, \nu\rangle)$ as $X\to \R e_{n+1}$.

We claim that $v_\varepsilon\ge 0$ for $t\in(-\infty,T)$. Suppose that this is not the case. Since $v_\varepsilon(\cdot,t)>\varepsilon$ at $\pd M_t$, $v_\varepsilon(x,t)>\varepsilon$ as $x\to M_t\cap \R e_{n+1}$, and $v_\varepsilon\to \varepsilon$ as $t\to-\infty$, there must exist a first time $t\in (-\infty,T)$ and an interior point $x\in M_t\setminus \R e_{n+1}$ at which $v_\varepsilon=0$. But at such a point $\inner{\cd H}{\vec r\,}=-\e(1-\langle X, \nu\rangle)<0$, which means that $\inner{\cd H}{\vec r\,}=-\vert\cd H\vert$ in a small spacetime neighbourhood of $(p,t)$, and hence, at $(p,t)$,
\[
\begin{split}
0\ge{}&\left(\partial_t-\Delta\right)v_\varepsilon\\
\ge{}&\vert A\vert^2\big(\!\inner{\cd H}{\vec r\,}-\varepsilon\langle X, \nu\rangle\big)+(c-1)\vert A\vert^2\inner{\cd H}{\vec r\,}+2\varepsilon H\\
={}&-\varepsilon\vert A\vert^2-(c-1)\varepsilon\vert A\vert^2(1-\langle X, \nu\rangle)+2\varepsilon H\\
\ge{}&\varepsilon(2-(2c+1)H)H\\
>{}&0
\end{split}
\]
which is absurd. Now take $\e\to 0$ to obtain $\inner{\cd H}{X}\ge 0$ in $M_t$ for $t\in (-\infty,T]$. 
Applying the maximum principle as in the proof of Lemma \ref{lem:old-but-not-ancient}, implies that $\inner{\cd H}{X}$ remains non-negative up to time $0$.

Having established the first claim, the second follows as in Lemma~\ref{lem:crude estimates}.
\end{proof}

\begin{proposition}\label{prop:linear analysis}
If we define $A\in(0,\infty)$ as in \eqref{eq:backwards asymptotics}, then
\[
\mathrm{e}^{-\lambda_0^2t}y(x,t)\to A\phi_0(\lambda_0\vert x\vert)\;\;\text{uniformly as}\;\; t\to-\infty\,.
\]
\end{proposition}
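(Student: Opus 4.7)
The strategy is to rescale the height by $\mathrm{e}^{-\lambda_0^2 t}$, pass to a subsequential $t\to-\infty$ limit, and identify that limit using the critical-Robin Liouville theorem (Corollary \ref{cor:Liouville}).

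Set $v(x,t):=\mathrm{e}^{-\lambda_0^2 t}y(x,t)$, defined on $\{|x|\le\cos\overline\theta(t)\}$, which exhausts $B^n$ as $t\to-\infty$ since $\overline\theta(t)\to 0$. First, I would establish uniform-in-$t$ $C^\infty_{\loc}$ bounds on $v$: the pointwise lower bound $v(0,t)=\mathrm{e}^{-\lambda_0^2 t}\underline y(t)\to A>0$ combined with convexity gives a lower bound; an upper bound of the form $v(x,t)\le C\phi_0(\lambda_0|x|)$ follows by comparison with the subsolutions $\Sigma^\lambda$ of Proposition \ref{prop:Angenent oval barriers} (sending $\lambda\nearrow\lambda_0$). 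Stahl's interior and boundary estimates on the second fundamental form and its derivatives then upgrade these to full $C^\infty_{\loc}$ control.

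Given any sequence $t_k\to-\infty$, the translates $v_k(x,s):=\mathrm{e}^{-\lambda_0^2(s+t_k)}y(x,s+t_k)$ subconverge in $C^\infty_{\loc}$ to a non-negative $v_\infty$ on $B^n\times\R$. The graphical mean curvature flow equation $y_t=\Delta y-D^2y(Dy,Dy)/(1+|Dy|^2)$ yields, after rescaling,
\[
\pd_s v_k=\Delta v_k-\lambda_0^2 v_k-\mathrm{e}^{-\lambda_0^2(s+t_k)}\frac{D^2y(Dy,Dy)}{1+|Dy|^2}\bigg|_{(x,s+t_k)},
\]
while the free boundary condition $\cd y\cdot x=y$, being linear and homogeneous, is preserved by the rescaling and passes to $|x|=1$ in the limit (since $\cos\overline\theta(t)\to 1$). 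Using $|Dy|\le\tan\overline\theta(t)=O(\mathrm{e}^{nt})$ together with the curvature bounds of Lemma \ref{lem:gradient estimate}, the nonlinear error vanishes on compact sets as $k\to\infty$, so $v_\infty$ solves $\pd_s v_\infty=\Delta v_\infty-\lambda_0^2 v_\infty$ in $B^n\times\R$ with $\cd v_\infty\cdot x=v_\infty$ on $\pd B^n\times\R$. Setting $u(x,s):=\mathrm{e}^{\lambda_0^2 s}v_\infty(x,s)$ then produces a non-negative ancient solution to the critical-Robin heat equation; since $v_\infty$ is bounded, $u=O(\mathrm{e}^{\lambda_0^2 s})$ as $s\to-\infty$, which satisfies the $\mathrm{e}^{o(-s)}$ hypothesis of Corollary \ref{cor:Liouville}. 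We conclude $v_\infty(x,s)=B\phi_0(\lambda_0|x|)$ for some $B\in\R$, and evaluating at $(x,s)=(0,0)$, together with $v_k(0,0)=\mathrm{e}^{-\lambda_0^2 t_k}\underline y(t_k)\to A$ and $\phi_0(0)=1$, forces $B=A$. Since every subsequential limit agrees, the full family converges, yielding the claim (uniformity in $x$ following from the $C^\infty_{\loc}$ convergence together with $\cos\overline\theta(t)\to 1$).

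The main obstacle is the quantitative decay of the nonlinear error after rescaling by $\mathrm{e}^{-\lambda_0^2 t}$; this requires extending the barrier-based bounds of Lemmas \ref{lem:crude estimates} and \ref{lem:gradient estimate} to the general convex ancient solution and checking that the resulting decay rates of $|Dy|$ and the second fundamental form dominate the $\mathrm{e}^{\lambda_0^2 t}$ factor used in the rescaling.
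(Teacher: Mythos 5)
Your overall strategy -- rescale by $\mathrm{e}^{-\lambda_0^2 t}$, take a subsequential $t\to-\infty$ limit, and invoke Corollary~\ref{cor:Liouville} -- is the same as the paper's, but there are two genuine gaps in the execution.

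First, and most seriously, you assume at two separate points that $\mathrm{e}^{-\lambda_0^2 t}\underline y(t)\to A>0$ holds for the \emph{general} convex ancient solution: once to obtain a lower bound on $v$, and once to identify $B=A$ by evaluating at $(0,0)$. But Lemma~\ref{lem:backwards asymptotics} (whose proof rests on the speed bound $\underline H\ge\lambda_0^2\underline y$, established by barrier comparison against specific initial data $\Sigma^{\lambda}_{t_\lambda}$) applies only to the \emph{constructed} solution; for an arbitrary ancient solution the existence and positivity of this limit are not known a priori -- indeed they are part of what Proposition~\ref{prop:linear analysis} is being used to establish. The paper avoids this circularity: it uses Corollary~\ref{cor:Liouville} only to deduce that the subsequential limit has the form $A\,\mathrm{e}^{-\lambda_0^2 t}\phi_0(\lambda_0|x|)$ for \emph{some} $A\ge0$, and then identifies this $A$ with the constructed solution's constant via the avoidance principle (if the two constants differed, one solution would eventually lie strictly above the other, contradicting the fact that both reach $e_{n+1}$ at $t=0$ and hence must intersect for all $t<0$).

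Second, your proposed upper bound $v(x,t)\le C\phi_0(\lambda_0|x|)$ cannot come from the $\Sigma^\lambda$ of Proposition~\ref{prop:Angenent oval barriers}: those are \emph{subsolutions}, so a solution lying above one at some time stays above, producing a \emph{lower} bound on $y$, not an upper bound. The upper bound the paper actually uses, $\limsup_{t\to-\infty}\mathrm{e}^{-\lambda_0^2 t}\overline y<\infty$, comes from a different mechanism: since the general solution reaches the origin at $t=0$, it must intersect the constructed solution for every $t<0$, so its $\underline y$ is bounded above by the constructed solution's $\overline y$; combining \eqref{eq:backwards asymptotics} with \eqref{eq:min y lower bound general} then controls $\overline y$. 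You will need this comparison argument in place of the barrier one. (Your remaining technical concerns -- tracking the nonlinear error in the graphical equation and passing the Robin boundary condition to $|x|=1$ -- are real but minor; the paper sidesteps them by working with the heat operator $\partial_t-\Delta^\tau$ intrinsically on the evolving hypersurfaces and using weak$^\ast$ compactness plus the gradient bound, rather than rescaling the Euclidean graphical equation.)
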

\begin{proof}
Given $\tau<0$, consider the rescaled height function
\[
y^{\tau}(x,t)\doteqdot \mathrm{e}^{-\lambda_0^2\tau}y(x,t+\tau)\,,
\]
which is defined on the time-translated flow $\{M^\tau_t\}_{t\in(-\infty,-\tau)}$, where $M^\tau_t\doteqdot M_{t+\tau}$. Note that
\begin{equation}\label{eq:heat equation on the flow}
\left\{\begin{aligned}
(\pd_t-\Delta^\tau)y^{\tau}={}&0\;\;\text{in}\;\;\{M_t^\tau\}_{t\in(-\infty,-\tau)}\\
\inner{\cd^\tau y^\tau}{N}={}&y\;\;\text{on}\;\;\{\pd M_t^\tau\}_{t\in(-\infty,-\tau)}\,,
\end{aligned}\right.
\end{equation}
where $\cd^\tau$ and $\Delta^\tau$ are the gradient and Laplacian on $\{M^\tau_t\}_{t\in(-\infty,-\tau)}$, respectively, and $N$ is the outward unit normal to $\pd B^{n+1}$. 

Since $\{M_t\}_{t\in(-\infty,0)}$ reaches the origin at time zero, it must intersect the constructed solution for all $t<0$. In particular, the value of $\underline y$ on the former can at no time exceed the value of $\overline y$ on the latter. But then \eqref{eq:backwards asymptotics} and \eqref{eq:min y lower bound general} yield
\begin{equation}\label{eq:sup bound sublimit}
\limsup_{t\to-\infty}\mathrm{e}^{-\lambda_0^2t}\overline y<\infty\,.
\end{equation}
This implies a uniform bound for $y^\tau$ on $\{M^\tau_t\}_{t\in(-\infty,T]}$ for any $T\in\R$. So Alaoglu's theorem yields a sequence of times $\tau_j\to-\infty$ such that $y^{\tau_j}$ converges in the weak$^\ast$ topology as $j\to\infty$ to some $y^\infty\in L^2_{\mathrm{loc}}(\overline B{}^n\times (-\infty,\infty))$. Since convexity and the boundary condition imply a uniform bound for $\cd^\tau y^\tau$ on any time interval of the form $(-\infty,T]$, we may also arrange that the convergence is uniform in space at time zero, say.

We conclude from Corollary \ref{cor:Liouville} that
\[
y^\infty(x,t)=A\mathrm{e}^{-\lambda_0^2t}\phi_0(\lambda_0\vert x\vert)
\]
for some $A\ge 0$. In particular,
\[
\mathrm{e}^{-\lambda_0^2\tau_j}y(x,\tau_j)=y^{\tau_j}(x,0)\to A\phi_0(\lambda_0\vert x\vert)\;\;\text{uniformly as}\;\; j\to\infty\,.
\]
Now, if $A$ is not equal to the corresponding value on the constructed solution (note that the full limit exists for the latter), then one of the two solutions must lie above the other at time $\tau_j$ for $j$ sufficiently large. But this violates the avoidance principle.
\end{proof}

\subsection{Uniqueness}

Uniqueness of the constructed ancient solution now follows directly from the avoidance principle (cf. \cite{FBCSF}).

\begin{proposition}\label{prop:uniqueness}
Modulo time translation and rotation about the origin, there is only one convex, locally uniformly convex ancient solution to free boundary mean curvature flow in the ball.
\end{proposition}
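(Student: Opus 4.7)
The plan is to compare two arbitrary convex, locally uniformly convex ancient solutions via their common backwards asymptotic profile together with the avoidance principle. First, by Stahl's theorem and Lemma~\ref{lem:reflection}, after a time translation and a rotation of each solution about the origin we may assume that both flows $\{M^1_t\}_{t\in(-\infty,0)}$ and $\{M^2_t\}_{t\in(-\infty,0)}$ contract to $e_{n+1}$ at time $0$ and are rotationally symmetric about the $e_{n+1}$-axis; in particular, each may be represented as a graph $x\mapsto y^i(x,t)$ bounding a convex upper cap $\Omega^i_t\subset B^{n+1}$.

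I would then invoke Proposition~\ref{prop:linear analysis}. A crucial feature of the proof of that proposition is that the avoidance-against-the-constructed-solution argument it contains forces the asymptotic constant $A$ to coincide with the one coming from the constructed solution, and hence the \emph{same} constant $A>0$ works for both normalized flows:
\[
\mathrm{e}^{-\lambda_0^2 t} y^i(x,t) \to A\,\phi_0(\lambda_0|x|)\quad\text{uniformly as}\ t\to-\infty,\quad i=1,2.
\]
The key step is then a time-translation comparison. For each $\sigma>0$, set $M^{2,\sigma}_t\doteqdot M^2_{t+\sigma}$, defined on $t\in(-\infty,-\sigma)$. Its height function $y^{2,\sigma}$ satisfies $\mathrm{e}^{-\lambda_0^2 t}y^{2,\sigma}\to \mathrm{e}^{\lambda_0^2\sigma}A\,\phi_0(\lambda_0|x|)$, strictly larger than the analogous limit for $y^1$ by the factor $\mathrm{e}^{\lambda_0^2\sigma}>1$. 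Uniform convergence therefore gives $y^{2,\sigma}>y^1$ on the common graph domain for all sufficiently negative $t$, i.e.\ $\Omega^{2,\sigma}_t\subsetneq\Omega^1_t$. The free boundary avoidance principle then propagates this strict inclusion forward in time to all $t\in(-\infty,-\sigma)$. Letting $\sigma\to 0^+$ gives $\Omega^2_t\subseteq\Omega^1_t$ for all $t<0$, and interchanging the roles of $M^1$ and $M^2$ gives the reverse inclusion, whence $M^1\equiv M^2$.

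The step I expect to be the main obstacle is the free-boundary avoidance: one must verify that the strict inclusion obtained from the asymptotic comparison really does propagate forward up to the translated singular time. This combines the strong maximum principle in the interior with the Hopf boundary point lemma at $\pd B^{n+1}$ (both flows meeting $\pd B^{n+1}$ orthogonally), and warrants care because one must preclude boundary touching as well as interior touching. The necessary tools appear already implicitly in the barrier arguments of \S\ref{sec:existence} and in the proof of Proposition~\ref{prop:linear analysis}.
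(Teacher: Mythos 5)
Your argument is correct and is essentially the paper's: use Proposition~\ref{prop:linear analysis} to fix the backward asymptotic constant $A$, time-translate to create a strict gap proportional to $\mathrm{e}^{\lambda_0^2\sigma}-1$, propagate it with free-boundary avoidance, and send $\sigma\to 0$. The paper closes slightly differently (comparing the arbitrary solution against the constructed one and then invoking the strong maximum principle at the common terminal point $e_{n+1}$ rather than swapping the roles of $M^1$ and $M^2$), but this is a cosmetic variation, not a different route.
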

\begin{proof}
Denote by $\{M_t\}_{t\in(-\infty,0)}$ the constructed ancient solution and let $\{M'_t\}_{t\in(-\infty,0)}$ be a second ancient solution which, without loss of generality, contracts to the point $e_{n+1}$ at time $0$. Given any $\t>0$, consider the time-translated solution $\{M_t^\tau\}_{t\in(-\infty,-\tau)}$ defined by $M_t^\t=M'_{t+\tau}$. By Proposition \ref{prop:linear analysis},
\[
\mathrm{e}^{-\lambda_0^2t}y^\tau(x,t)
\to A\mathrm{e}^{\lambda_0^2\tau}\phi_0(\lambda_0\vert x\vert)\;\;\text{as}\;\; t\to-\infty
\]
uniformly in $x$. So $M^\tau_t$ lies above $M_t$ for $-t$ sufficiently large. The avoidance principle then ensures that $M^\tau_t$ lies above $M_t$ for all $t\in(-\infty,0)$. Taking $\tau\to 0$, we find that $M'_t$ lies above $M_t$ for all $t<0$. Since the two solutions both reach the point $e_{n+1}$ at time zero, they intersect for all $t<0$ by the avoidance principle. The strong maximum principle then implies that the two solutions coincide for all $t$.
\end{proof}

\bibliographystyle{acm}
\bibliography{../../../bibliography}

\end{document}